\documentclass[11pt]{amsart}
\usepackage[T1]{fontenc}
\usepackage[utf8]{inputenc}
\usepackage{lmodern}
\usepackage[margin=28mm,top=27mm,bottom=27mm]{geometry}
\usepackage{amsmath,amssymb,amsthm,mathtools}
\usepackage{microtype}
\usepackage{tikz}
\usetikzlibrary{calc,arrows.meta}
\usepackage[colorlinks=true,linkcolor=blue!45!black,citecolor=blue!45!black,urlcolor=blue!45!black]{hyperref}
\hypersetup{pdftitle={Infinite light rays and infinite clusters with infinitely many pivots},pdfauthor={Martin P.W. Zerner},pdfsubject={Percolation and the Lorentz mirror model}}
\newtheorem{theorem}{Theorem}
\newtheorem{proposition}[theorem]{Proposition}
\newtheorem{lemma}[theorem]{Lemma}
\newtheorem{corollary}[theorem]{Corollary}
\theoremstyle{definition}
\newtheorem{definition}[theorem]{Definition}
\newtheorem{example}[theorem]{Example}
\newtheorem*{question}{Question}
\newcommand{\N}{\mathbb N}
\newcommand{\Z}{\mathbb Z}
\newcommand{\R}{\mathbb R}
\newcommand{\PP}{\mathbb P}
\newcommand{\EE}{\mathbb E}
\newcommand{\TT}{\mathbb T}
\newcommand{\bk}{\mathbf k}
\newcommand{\bK}{\mathbf K}
\newcommand{\ee}{\mathbf e}
\newcommand{\ff}{\mathbf f}
\newcommand{\bl}{\mathrm{bl}}
\newcommand{\wh}{\mathrm{wh}}
\newcommand{\crossstate}{\mathrm{cr}}
\DeclareMathOperator{\Bin}{Bin}

\title[Infinitely many pivots]{Infinite light rays and infinite clusters\\ with infinitely many pivots}
\author{Martin P.W. Zerner}
\address{University of T\"ubingen, Germany}
\email{martin.zerner@uni-tuebingen.de}
\date{October 2026}
\subjclass[2020]{60K35; 82B43, 05C05, 05C10}
\keywords{Lorentz mirror model, percolation at criticality, pivotal edge, medial graph, one-ended tree, uniform spanning tree}

\tikzset{vtx/.style={circle,fill=black,inner sep=0pt,minimum size=2.5pt},
strand/.style={black,line width=.65pt},mirror/.style={black,line width=1.5pt}}

\def\Amp{0.22}\def\Sp{0.5}\def\Ph{35.9}\def\Rl{0.9}

\newcommand{\cbraid}[4]{  \draw[#4,line width=1.2pt] plot[domain=#2:#3,samples=140,smooth,variable=\u]
    ({#1+\Amp*sin(180*\u)},{-0.2-\Sp*\u});
  \draw[#4,line width=1.2pt] plot[domain=#2:#3,samples=140,smooth,variable=\u]
    ({#1-\Amp*sin(180*\u)},{-0.2-\Sp*\u});}

\newcommand{\ctopL}[2]{\draw[#2,line width=1.2pt] (#1,-0.2) to[out=180-\Ph,in=0] (#1-0.4,0) -- (#1-\Rl,0);}
\newcommand{\ctopR}[2]{\draw[#2,line width=1.2pt] (#1,-0.2) to[out=\Ph,in=180] (#1+0.4,0) -- (#1+\Rl,0);}
\newcommand{\cbotL}[2]{\draw[#2,line width=1.2pt] (#1,-2.2) to[out=\Ph-180,in=0] (#1-0.4,-2.4) -- (#1-\Rl,-2.4);}
\newcommand{\cbotR}[2]{\draw[#2,line width=1.2pt] (#1,-2.2) to[out=-\Ph,in=180] (#1+0.4,-2.4) -- (#1+\Rl,-2.4);}
\newcommand{\ctop}[2]{\ctopL{#1}{#2}\ctopR{#1}{#2}}
\newcommand{\cbot}[2]{\cbotL{#1}{#2}\cbotR{#1}{#2}}
\newcommand{\hmir}[2]{\draw[mirror] ({#1-0.19},{-0.2-\Sp*#2})--({#1+0.19},{-0.2-\Sp*#2});}
\newcommand{\vmir}[2]{\draw[mirror] (#1,{-0.2-\Sp*#2-0.19})--(#1,{-0.2-\Sp*#2+0.19});}
\newcommand{\sseg}[4]{  \draw[#4,line width=1.2pt] plot[domain=#2:#2+1,samples=70,smooth,variable=\u]
    ({#1+#3*\Amp*abs(sin(180*\u))},{-0.2-\Sp*\u});}
\newcommand{\bdots}[1]{\foreach \j in {0,...,4}{\node[vtx] at (#1,{-0.2-\Sp*\j}){};}}

\providecommand{\E}{}\providecommand{\Ey}{}
\newcommand{\Ew}{0.22}          %
\newcommand{\Lb}{.7*.75*\Ew}     %
\newcommand{\ustedges}{%
\E{0}{0}{0}{1}
\E{0}{1}{90}{3}
\E{0}{2}{0}{3}
\E{0}{3}{0}{1}
\E{1}{0}{90}{2}
\E{1}{1}{180}{2}
\E{1}{2}{0}{3}
\E{1}{3}{0}{2}
\E{2}{0}{90}{3}
\Ey{2}{1}{90}{4}
\Ey{2}{2}{90}{4}
\Ey{2}{3}{0}{4}
\E{3}{0}{180}{3}
\Ey{3}{1}{180}{1}
\E{3}{2}{90}{1}
\Ey{3}{3}{0}{4}
\E{4}{0}{180}{3}
\E{4}{1}{90}{1}
\E{4}{2}{90}{2}
\Ey{4}{3}{0}{4}
\E{5}{0}{180}{3}
\E{5}{1}{0}{1}
\E{5}{2}{0}{1}
\Ey{5}{3}{0}{4}
\E{6}{0}{180}{2}
\E{6}{1}{270}{2}
\E{6}{2}{90}{2}
\Ey{6}{3}{90}{4}
}
\newcommand{\ustcorners}{%
\C{0}{0}{0}{1}{0}{1}
\C{0}{1}{0}{2}{90}{3}
\C{0}{1}{90}{3}{0}{2}
\C{0}{2}{0}{3}{270}{3}
\C{0}{2}{270}{3}{0}{3}
\C{0}{3}{0}{1}{0}{1}
\C{1}{0}{90}{2}{180}{1}
\C{1}{0}{180}{1}{90}{2}
\C{1}{1}{180}{2}{270}{2}
\C{1}{1}{270}{2}{180}{2}
\C{1}{2}{0}{3}{180}{3}
\C{1}{2}{180}{3}{0}{3}
\C{1}{3}{0}{2}{180}{1}
\C{1}{3}{180}{1}{0}{2}
\C{2}{0}{0}{3}{90}{3}
\C{2}{0}{90}{3}{0}{3}
\C{2}{1}{0}{1}{90}{4}
\C{2}{1}{90}{4}{270}{3}
\C{2}{1}{270}{3}{0}{1}
\C{2}{2}{90}{4}{180}{3}
\C{2}{2}{180}{3}{270}{4}
\C{2}{2}{270}{4}{90}{4}
\C{2}{3}{0}{4}{180}{2}
\C{2}{3}{180}{2}{270}{4}
\C{2}{3}{270}{4}{0}{4}
\C{3}{0}{0}{3}{180}{3}
\C{3}{0}{180}{3}{0}{3}
\C{3}{1}{180}{1}{180}{1}
\C{3}{2}{90}{1}{90}{1}
\C{3}{3}{0}{4}{180}{4}
\C{3}{3}{180}{4}{270}{1}
\C{3}{3}{270}{1}{0}{4}
\C{4}{0}{0}{3}{180}{3}
\C{4}{0}{180}{3}{0}{3}
\C{4}{1}{90}{1}{90}{1}
\C{4}{2}{90}{2}{270}{1}
\C{4}{2}{270}{1}{90}{2}
\C{4}{3}{0}{4}{180}{4}
\C{4}{3}{180}{4}{270}{2}
\C{4}{3}{270}{2}{0}{4}
\C{5}{0}{0}{2}{180}{3}
\C{5}{0}{180}{3}{0}{2}
\C{5}{1}{0}{1}{0}{1}
\C{5}{2}{0}{1}{0}{1}
\C{5}{3}{0}{4}{180}{4}
\C{5}{3}{180}{4}{0}{4}
\C{6}{0}{90}{2}{180}{2}
\C{6}{0}{180}{2}{90}{2}
\C{6}{1}{180}{1}{270}{2}
\C{6}{1}{270}{2}{180}{1}
\C{6}{2}{90}{2}{180}{1}
\C{6}{2}{180}{1}{90}{2}
\C{6}{3}{90}{4}{180}{4}
\C{6}{3}{180}{4}{270}{2}
\C{6}{3}{270}{2}{90}{4}
}

\newcommand{\Tbundle}[5][bun]{\foreach \j in {1,...,#5}{%
  \pgfmathsetmacro\o{(\j-(#5+1)/2)*\Ew}%
  \draw[#1] (#2,#3) .. controls ($(#2,#3)+(#4:.34)+({#4+90}:\o)$)
     and ($(#2,#3)+(#4:.66)+({#4+90}:\o)$) .. ($(#2,#3)+(#4:1)$);}}
\newcommand{\Tbraid}[5][med]{%
  \foreach \j in {1,...,#5}{\pgfmathsetmacro\o{.75*(\j-(#5+1)/2)*\Ew}%
    \node[mv] at ($(#2,#3)+(#4:.5)+({#4+90}:\o)$){};}%
  \ifnum#5>1 \foreach \j in {1,...,\numexpr#5-1\relax}{%
    \pgfmathsetmacro\o{.75*(\j-(#5+1)/2)*\Ew}\pgfmathsetmacro\p{\o+.75*\Ew}%
    \coordinate (ma) at ($(#2,#3)+(#4:.5)+({#4+90}:\o)$);
    \coordinate (mb) at ($(#2,#3)+(#4:.5)+({#4+90}:\p)$);
    \draw[#1] (ma) .. controls ($(ma)+({#4+135}:\Lb)$) and ($(mb)+({#4+225}:\Lb)$) .. (mb);
    \draw[#1] (ma) .. controls ($(ma)+({#4+45}:\Lb)$) and ($(mb)+({#4-45}:\Lb)$) .. (mb);}\fi}
\newcommand{\C}[6]{%
  \pgfmathsetmacro\ha{.75*(#4-1)*\Ew/2}\pgfmathsetmacro\hb{.75*(#6-1)*\Ew/2}%
  \pgfmathsetmacro\D{mod(#5-#3+360,360)}\pgfmathsetmacro\D{\D<1?360:\D}%
  \pgfmathsetmacro\s{0.2+0.0014*\D}\pgfmathsetmacro\t{\D>180?(\D-180)/180*0.45:0}%
  \coordinate (pa) at ($(#1,#2)+(#3:.5)+({#3+90}:\ha)$);
  \coordinate (pb) at ($(#1,#2)+(#5:.5)+({#5+90}:-\hb)$);
  \draw[med] (pa) .. controls ($(pa)+({#3+135}:\s)+({#3+180}:\t)$)
                    and ($(pb)+({#5+225}:\s)+({#5+180}:\t)$) .. (pb);}

\newcommand{\Abfnt}[4]{\Tbundle[bfnt]{#1}{#2}{#3}{#4}}
\newcommand{\Abraid}[4]{\Tbraid[med]{#1}{#2}{#3}{#4}}
\newcommand{\Abraidray}[4]{\begin{scope}[mv/.style={circle,fill=orange!85!black,inner sep=0pt,minimum size=2.4pt}]%
  \Tbraid[medray]{#1}{#2}{#3}{#4}\end{scope}}

\newcommand{\bundlefaces}[2]{%
  \pgfmathtruncatemacro{\lastgap}{#2-2}%
  \foreach \j in {0,...,\lastgap}{%
    \pgfmathsetmacro{\upper}{.36*(#2-1)-.72*\j}%
    \foreach \side in {-1,1}{%
      \draw[medial] plot[domain=0:1,samples=31,variable=\vv]
        ({2*#1+1+\side*.27*sin(180*\vv)},
         {(\upper-.72*\vv)*(1-(.27*sin(180*\vv))^2)});}}}

\newcommand{\bundleeyes}[2]{%
  \pgfmathtruncatemacro{\lastgap}{#2-2}%
  \foreach \j in {0,...,\lastgap}{%
    \pgfmathsetmacro{\upper}{.36*(#2-1)-.72*\j}%
    \fill[white] plot[domain=0:1,samples=31,variable=\vv]
        ({2*#1+1-.27*sin(180*\vv)},{(\upper-.72*\vv)*(1-(.27*sin(180*\vv))^2)})
      -- plot[domain=1:0,samples=31,variable=\vv]
        ({2*#1+1+.27*sin(180*\vv)},{(\upper-.72*\vv)*(1-(.27*sin(180*\vv))^2)})
      -- cycle;}}

\begin{document}
\begin{abstract}
It is conjectured that in the Lorentz mirror model on $\Z^2$ all trajectories are finite. Quas showed that, almost surely, the two halves of any infinite trajectory, cut at an edge, meet at infinitely many vertices, each of which is pivotal: changing its state can make the trajectory finite. We construct plane graphs on which infinite trajectories exist, and all of them have this property. Given $p\in(0,1)$, we replace each edge of a random one-ended subtree of $\Z^2$ by a number of parallel edges, depending on $p$, that is at most logarithmic in the height of the corresponding descendant tree. The medial graph of the resulting plane multigraph is a factor of the tree, and for the uniform spanning tree it has finite vertex intensity. In the mirror model on this medial graph, with probability $p$ for mirrors along primal edges and any probability $r\in(0,1-p]$ for mirrors along dual edges, infinite trajectories exist, and the two halves of each of them meet infinitely often. The construction rests on Bernoulli percolation: every random one-ended locally finite tree can be decorated in this way such that bond percolation with parameter $p$ is critical and percolates, and every vertex of the infinite cluster has infinitely many pivotal edges. We characterize this property for bond and site percolation and for mirrors.
\end{abstract}
\maketitle

\section{Introduction}
In the Lorentz mirror model on $\Z^2$ with parameter $p\in(0,\frac12]$, independently at each vertex a two-sided mirror is placed along one of the two diagonals, each with probability $p$, and no mirror with probability $1-2p$. Light travels along the edges of $\Z^2$, is reflected by the mirrors and passes straight through empty vertices. The edges traversed by the light through an edge $\ee$ form its trajectory $C_\ee$. For $p=\frac12$, all trajectories are almost surely finite, and it is conjectured that the same holds for every $p\in(0,\frac12)$, see \cite{G,G2}. Quas \cite[Theorem~3]{Q} showed that for $0<p<\frac12$, almost surely, whenever $C_\ee$ is infinite, the two halves of $C_\ee$ obtained by cutting at $\ee$ meet at infinitely many vertices. By Lemma~\ref{lem:meeting}, changing the state of any of these vertices can make $C_\ee$ finite. %
Graphs on which infinite trajectories exist and all are of this kind are called $p$-fragile for mirrors (Definition~\ref{def:mirrors}), and by Quas' theorem the conjecture is equivalent to $\Z^2$ not being $p$-fragile for mirrors for any $p\in(0,\frac12)$.

One might hope that this meeting property alone excludes infinite trajectories. Our main result shows that it does not, even on random plane graphs of finite intensity whose laws are mixing and invariant under all symmetries of $\Z^2$. None of our results depends on \cite{Q}.

Let $G$ be a connected multigraph embedded in $\R^2$ such that every compact set meets only finitely many edges. Its medial graph $M(G)$ is a 4-regular plane multigraph with a vertex at the midpoint of each edge of $G$, see Section~\ref{sec:mirrors} and Figure~\ref{fig:chain}. In the mirror model on $M(G)$ with parameters $p,r>0$, $p+r\le1$, independently at the midpoint of each edge $e$ of $G$, a two-sided mirror is placed along $e$ with probability $p$, along the dual edge $e^*$ with probability $r$, and no mirror with probability $1-p-r$. If $G$ is the square lattice rotated by $45^\circ$ and scaled by $\sqrt2$ such that $M(G)=\Z^2$, then $r=p$ gives the Lorentz mirror model. If $r=1-p$, the trajectories are the interfaces between the open clusters of $G$ and those of its dual in Bernoulli bond percolation on $G$ with parameter $p$.

A ray is a one-sided infinite simple path, and a tree is one-ended if any two of its rays have a common tail. A subtree of $\Z^d$ is a tree whose vertices are points of $\Z^d$ and whose edges are edges of the lattice $\Z^d$; it need not be spanning. For a one-ended subtree $\TT$ of $\Z^2$, the ancestor $A(x)$ of a vertex $x$ of $\TT$ is its neighbor on the ray from $x$ to infinity, and $H(x)$ is the height of the tree of descendants of $x$, see \eqref{eq:height}. For $\bK=(K_x)_{x\in V(\TT)}\in\N^{V(\TT)}$, let $\TT_\bK$ be obtained from $\TT$ by replacing each edge $\{x,A(x)\}$ by $K_x$ parallel copies. We draw them close to this lattice edge, in the same way for all lattice edges and symmetrically with respect to the edge and to its perpendicular bisector.

Random subtrees $\TT$ of $\Z^d$ are defined on a probability space with probability measure $\PP$ and expectation $\EE$. A factor of $\TT$ is a random object of the form $\Phi(\TT)$, where $\Phi$ is a measurable map that commutes with all symmetries $\sigma$ of $\Z^d$, i.e.\ $\Phi(\sigma T)=\sigma\Phi(T)$ for all subtrees $T$ of $\Z^d$. The law of a factor of $\TT$ is invariant under every symmetry of $\Z^d$ that preserves the law of $\TT$, and it is ergodic or mixing under translations if the law of $\TT$ is.

\begin{theorem}\label{th:main}
Let $\TT$ be a random one-ended subtree of $\Z^2$ and $0<p<1$. There is a function $g:\N_0\times\N\to\N$ with $g(a,b)\le c_p\log(a+2)$, where $c_p<\infty$ depends only on $p$, such that for $K_x:=g(H(x),H(A(x)))$ the following holds. The medial graph $M(\TT_\bK)$ is a factor of $\TT$. For almost every realization of $\TT$, every $r\in(0,1-p]$ and every edge $\ee$ of $M(\TT_\bK)$, the trajectory $C_\ee$ is infinite with positive probability, and almost surely on this event the two halves of $C_\ee$ meet at infinitely many vertices. If $\TT$ is stationary and $\EE[\log(H(0)+2);\,0\in V(\TT)]<\infty$, then the expected number of vertices of $M(\TT_\bK)$ in a unit square is finite.
\end{theorem}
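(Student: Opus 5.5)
The proof reduces the mirror statement to Bernoulli bond percolation on the decorated tree $\TT_\bK$ and then transfers it back to mirrors. The abstract announces both ingredients: a percolation result, that every random one-ended locally finite tree can be decorated so that bond percolation with parameter $p$ is critical, percolates, and every vertex of the infinite cluster has infinitely many pivotal edges; and a characterization of this property for mirrors. The plan has four steps.

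\emph{Step 1: choose $g$.} A bundle of $K$ parallel edges is open with probability $1-(1-p)^K$. The target is that the probability $\theta(x)$ that $x$ connects to infinity inside the subtree "$x$, its ancestors, and the bundles along the ray" stays bounded below along the ray. At the same time, infinitely many bundles on each ray should be "weak", i.e.\ closed with probability bounded away from $0$. Such a bundle is pivotal whenever it is the unique open edge of its bundle.

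Concretely, the descendant tree of $x$ must be kept from percolating, so that the infinite cluster has to use the ray towards infinity. The bundle $\{x,A(x)\}$ then needs closing probability roughly summable along the ray, for instance $(1-p)^{K_x}\approx (H(x)+2)^{-2}$. This gives $K_x\approx c_p\log(H(x)+2)$ with $c_p=2/|\log(1-p)|$.

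Heights increase strictly along the ray, since $H(A(x))\ge H(x)+1$. Hence $\sum_{\text{ray}}(H+2)^{-2}<\infty$, and the ray to infinity is open from some point on with positive probability. The dependence on $H(A(x))$ is used to insert infinitely many bundles with $K=1$. These sit at positions where the height jumps, or are selected deterministically from $(H(x),H(A(x)))$, say whenever $H(A(x))$ crosses a power of $2$. At these places pivotality survives while the product of open probabilities stays positive.

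Once pivotal edges are found in every infinite cluster, one checks that percolation is critical, i.e.\ that it fails for smaller $p$. This is the delicate balancing, and I would take it from the paper's percolation theorem.

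\emph{Step 2: factor and intensity.} $H$ and $A$ are determined by $\TT$ in an equivariant way. The drawing of the parallel edges is fixed and symmetric with respect to the lattice edge and to its perpendicular bisector. Hence $M(\TT_\bK)$ is a factor of $\TT$.

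For the intensity, the vertices of $M(\TT_\bK)$ in a unit square are the midpoints of the parallel copies of the boundedly many lattice edges meeting that square. Their expected number is at most a constant times $\EE[K_0;0\in V(\TT)]\le c_p\EE[\log(H(0)+2);0\in V(\TT)]<\infty$, using stationarity.

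\emph{Step 3: transfer to mirrors.} This step needs three facts.
\begin{itemize}
\item For $r=1-p$, trajectories are the interfaces of bond percolation on $G=\TT_\bK$. An infinite cluster (a tree-like structure with no dual percolation) produces an infinite interface through any given medial edge with positive probability.
\item Halves of the trajectory meeting at a vertex correspond to pivotal edges. This is the converse direction of Lemma~\ref{lem:meeting} in the percolation setting: the trajectory of the boundary of the cluster passes through the midpoint of a pivotal bundle edge on both sides.
\item For $r<1-p$, some mirrors are removed. Dual mirrors on $\TT_\bK$ only matter between parallel edges, because the dual of a tree plus parallel edges has a single infinite face region. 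A coupling argument should show that light still follows the primal open structure, and that crossing points at single-edge bundles remain meeting points.
\end{itemize}
Holding for every $\ee$ and every $r$ simultaneously, for almost every $\TT$, follows from a countable union and from the fact that the construction of $g$ does not depend on $r$.

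\emph{Main obstacle.} The hardest part is Step 3 for general $r\in(0,1-p]$, showing that infinite trajectories exist and meet infinitely often when empty vertices are allowed. There I would first analyse a single bundle of $K$ parallel edges as a "braid" gadget. I would compute the transition probabilities of light through it, namely passing along or reflecting back. The goal is to show that it acts like an effective open/closed edge whose open probability is at least the percolation one. The weak $K=1$ bundles then give meeting points with probability bounded below independently.
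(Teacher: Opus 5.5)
\paragraph{Step 1 does not work, and it is the core of the theorem.} You ask for infinitely many bundles on each ray that are closed with probability bounded away from $0$ (e.g.\ $K=1$ whenever $[H(x),H(A(x)))$ contains a power of $2$). At the same time you want the ray to be open from some point on with positive probability. These requirements are incompatible. The finite branches off the ray from $\rho$ do not matter for $I_\rho$, so $P_p[I_\rho]=\prod_{n\ge0}\bigl(1-q^{K_{A^n(\rho)}}\bigr)$, and infinitely many factors $1-q$ make this $0$. With your insertions nothing percolates. In the mirror model a one-vertex braid reflects with probability $b_1=r>0$, so all trajectories would be finite too.

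Without the insertions, your base choice $q^{K_x}\approx(H(x)+2)^{-2}$ fails in the opposite direction. A bundle of size $K$ is pivotal with probability $Kpq^{K-1}$. Since $X_n:=H(A^n(\rho))\ge n$ is strictly increasing, $\sum_n\log(X_n+2)(X_n+2)^{-2}<\infty$, so there are only finitely many pivots. What you need is exactly \eqref{eq:series} along the ray: $\sum_nq^{K_{A^n(\rho)}}<\infty$ but $\sum_nK_{A^n(\rho)}q^{K_{A^n(\rho)}}=\infty$. The only room between these two series is the factor $K$. So the multiplicities must tend to infinity, but slowly, e.g.\ $q^{k_i}\asymp1/(i(\log i)^2)$ as in Example~\ref{ex:log}.

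\paragraph{The second missing idea is how to make each $k_i$ actually occur along a random ray.} The heights $X_n$ may skip most integers, so a choice depending on $H(x)$ alone does not guarantee the divergence. Deterministic thresholds such as powers of $2$ may pile up in single intervals $[X_n,X_{n+1})$. The paper picks thresholds $0=t_0<t_1<\cdots$ from the laws of the $X_n$ with $\PP[X_{t_{i-1}+1}>t_i]\le2^{-i}$. It sets $g(a,b):=k_{f(a,b)}$, where $f(a,b)$ is the smallest $i$ with $a\le t_i<b$, and $f(a,b):=a$ if there is none. Then:
\begin{enumerate}
\item every index occurs at most twice along the ray, which gives $\sum q^K<\infty$;
\item by Borel--Cantelli every large $i$ occurs at least once, which gives $\sum Kq^K=\infty$;
\item $f(a,b)\le a$, which gives $g(a,b)\le c_p\log(a+2)$.
\end{enumerate}

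\paragraph{Steps 2 and 3.} Step 2 is fine. In Step 3 your braid analysis is the right idea, and it is what the paper does in Proposition~\ref{prop:braids}. A braid with a black mirror passes the light, and the two halves meet in it if and only if it has exactly one black mirror, which has probability $kpq^{k-1}$. Without black mirrors it reflects if and only if the number of crossings is even, which has probability $b_k\in[\min\{r/q,\frac12\}q^k,q^k]$. So the meeting points are braids with exactly one black mirror, not one-edge bundles, and the mirrors lead to the same two series.

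You also need the reduction of Proposition~\ref{prop:tree-mirrors}. There each finite branch off the ray becomes a piece of the medial graph with two exits, which can be replaced by a single edge. This transfer is deterministic given the graph and holds for all $r$ at once. So the uncountably many values of $r$ need no countable union.
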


The simplest instance is the deterministic subtree $\N_0\times\{0\}$ of $\Z^2$, for which $M(\TT_\bK)$ is a chain of braids as in Figure~\ref{fig:chain}. Our main example is the uniform spanning tree of $\Z^2$. Its law is invariant under all symmetries of $\Z^2$ and mixing \cite[Section~8]{BLPS}, and $\PP[H(0)\ge n]=n^{-3/5+o(1)}$ \cite{LLS}, see Section~\ref{sec:trees}. Thus the meeting property of Quas' theorem, even together with these properties and finite intensity, does not exclude infinite trajectories.

The construction is based on percolation. A graph is called $p$-fragile for bonds if Bernoulli bond percolation with parameter $p$ has an infinite cluster, and almost surely every vertex of an infinite cluster has infinitely many pivotal edges, whose closure would make its cluster finite. Proposition~\ref{prop:perc} characterizes this property and its analogue for sites, for instance by the absence of bi-infinite open trails, or by the structure of the infinite cluster as a one-ended tree of finite blocks. It implies $p=p_c$, i.e.\ percolation at criticality. By Theorem~\ref{th:tree}, every random one-ended locally finite tree becomes $p$-fragile for bonds when its edges are replaced by bundles as in Theorem~\ref{th:main}. The medial graph of such a bundled tree is a tree of braids, and for every $r\in(0,1-p]$, it is $(p,r)$-fragile for mirrors if and only if the bundled tree is $p$-fragile for bonds (Proposition~\ref{prop:tree-mirrors}).

Throughout, graphs are connected and locally finite and may have parallel edges. Assertions about percolation or mirrors on random graphs refer to the product measure given the graph, for almost every graph.

\section{Fragility for percolation}\label{sec:perc}
Let $G$ be infinite and $0<p<1$. Under $P_p$, the edges in bond percolation, or the vertices in site percolation, are independently open with probability $p$. Let $C_x$ be the open cluster of $x$, with $C_x=\varnothing$ if $x$ is closed in site percolation, $I_x:=\{|C_x|=\infty\}$, and $p_c(G):=\inf\{u:P_u[I_x]>0\}$. An edge or vertex is pivotal for an event if changing its state changes whether the event occurs. Let $\Pi_x$ be the set of pivots for $I_x$, without $x$ in site percolation.

\begin{lemma}[Finite energy; cf.\ {\cite[Coupling~2.5]{HJ}}]\label{lem:energy}
Let $P$ be a product measure on a countable product of finite sets. If every configuration in an event $A$ can be changed at finitely many coordinates, to states of positive probability, such that an event $B$ occurs, then $P[A]>0$ implies $P[B]>0$.
\end{lemma}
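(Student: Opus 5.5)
We prove Lemma~\ref{lem:energy} by a countable decomposition of $A$ according to the finitely many coordinates that are changed, followed by a comparison of probabilities on a single piece. Write $\Omega=\prod_{i\in I}S_i$ with $I$ countable and each $S_i$ finite, and $P=\bigotimes_i\mu_i$. For a configuration $\omega\in A$, the hypothesis provides a finite set $F\subset I$ and a configuration $\eta\in\prod_{i\in F}S_i$ with $\mu_i(\eta_i)>0$ for all $i\in F$, such that the modified configuration $\omega^{F,\eta}$ (equal to $\eta$ on $F$ and to $\omega$ off $F$) lies in $B$. The pairs $(F,\eta)$ range over a countable set, because $I$ is countable and each $S_i$ is finite. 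Hence
\[
A\subseteq\bigcup_{(F,\eta)}A_{F,\eta},\qquad A_{F,\eta}:=\{\omega\in A:\ \omega^{F,\eta}\in B\}.
\]

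If $P[A]>0$, countable subadditivity gives some pair $(F,\eta)$ with $P^*[A_{F,\eta}]>0$. If $A$ and $B$ are measurable, the map $\omega\mapsto\omega^{F,\eta}$ is measurable, so $A_{F,\eta}$ is measurable. In general one can work with outer measure, or simply assume measurability as in all applications in the paper.

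Fix this pair $(F,\eta)$ and let $T(\omega):=\omega^{F,\eta}$. The key step is the comparison
\[
P[B]\ \ge\ P[T(A_{F,\eta})]\ \ge\ \prod_{i\in F}\mu_i(\eta_i)\cdot P[A_{F,\eta}]>0.
\]
To obtain the second inequality, let $A'$ be the projection of $A_{F,\eta}$ onto the coordinates off $F$, so that $A'$ is a subset of $\prod_{i\notin F}S_i$. Then $T(A_{F,\eta})=\{\eta\}\times A'$. By the product structure,
\[
P[\{\eta\}\times A']=\prod_{i\in F}\mu_i(\eta_i)\;P_{F^c}[A'],
\]
where $P_{F^c}$ is the product measure on the coordinates off $F$. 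Since $A_{F,\eta}\subseteq\prod_{i\in F}S_i\times A'$, we also have $P[A_{F,\eta}]\le P_{F^c}[A']$. The first inequality holds because $T(A_{F,\eta})\subseteq B$ by the definition of $A_{F,\eta}$.

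The only delicate point is measurability of the projection $A'$. We avoid it as follows. Since $\prod_{i\in F}S_i$ is finite, $A'$ is the finite union, over $\xi\in\prod_{i\in F}S_i$, of the sections $\{\omega':(\xi,\omega')\in A_{F,\eta}\}$. Each section is measurable, so $A'$ is measurable. This completes the argument.
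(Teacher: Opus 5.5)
Your proof is correct. The paper gives no proof of this lemma and only refers to H\"aggstr\"om and Jonasson (Coupling~2.5), so there is no internal argument to compare with. The countable decomposition of $A$ over pairs $(F,\eta)$ with $\mu_i(\eta_i)>0$ is exactly what is needed. The bound via $T(A_{F,\eta})=\{\eta\}\times A'$ is sound, and so is your finite-union-of-sections argument for the measurability of $A'$.

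A slightly shorter finish avoids images and projections altogether. The event $T^{-1}(B)$ depends only on the coordinates off $F$, so it is independent of $\{\omega_F=\eta\}$ (the event that $\omega$ agrees with $\eta$ on $F$). On that event $T(\omega)=\omega$, so $\{\omega_F=\eta\}\cap T^{-1}(B)=\{\omega_F=\eta\}\cap B$. Since $A_{F,\eta}\subseteq T^{-1}(B)$, this gives
\[
P[B]\ \ge\ \prod_{i\in F}\mu_i(\eta_i)\,P[T^{-1}(B)]\ \ge\ \prod_{i\in F}\mu_i(\eta_i)\,P[A_{F,\eta}]\ >\ 0 .
\]
This is the usual resampling form of finite energy. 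It needs only the measurability of $T$, so the delicate point you flagged never arises.

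Your remark about outer measure is also unnecessary. The statement takes $A$ and $B$ to be events, and then $A_{F,\eta}=A\cap T^{-1}(B)$ is measurable.
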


In bond percolation, the blocks of an open cluster are the components that remain after deleting its bridges, and contracting them gives its bridge-block tree. In site percolation, the blocks are the maximal connected subgraphs without a cutvertex, and the block-cutvertex tree has a vertex for each block and each cutvertex, with edges between cutvertices and the blocks containing them.

\begin{proposition}\label{prop:perc}
Suppose $P_p[I_x]>0$ for some, hence every, $x\in V(G)$. In bond percolation the following are equivalent:
\begin{align*}
\text{(A)}\quad&\forall x:\quad P_p[|\Pi_x|=\infty\mid I_x]=1;\\
\text{(E)}\quad&\exists x:\quad P_p[|\Pi_x|=\infty\mid I_x]=1;\\
\text{(A$'$)}\quad&\forall x:\quad P_p[|\Pi_x|\ge1\mid I_x]=1;\\
\text{(B)}\quad&\text{almost surely there is no bi-infinite open trail;}\\
\text{(BB)}\quad&\text{almost surely every infinite open cluster has only finite blocks}\\
&\text{and a one-ended bridge-block tree.}
\end{align*}
Here a trail is a path without repeated edges. The same holds in site percolation, with ``trail'' replaced by ``simple path'' and ``bridge-block tree'' by ``block-cutvertex tree''. In either case, these conditions imply $p=p_c(G)$, and that almost surely there is exactly one infinite open cluster.
\end{proposition}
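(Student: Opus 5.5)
The plan is to prove the cycle of implications $\text{(A)}\Rightarrow\text{(E)}$, $\text{(A)}\Rightarrow\text{(A$'$)}$ (both trivial), then $\neg\text{(B)}\Rightarrow\neg\text{(E)}$ and $\neg\text{(B)}\Rightarrow\neg\text{(A$'$)}$, then $\text{(B)}\Rightarrow\text{(BB)}$, and finally $\text{(BB)}\Rightarrow\text{(A)}$. Afterwards I derive $p=p_c(G)$ and uniqueness. I do bond percolation in detail. The site case is the same argument with vertex cuts instead of edge cuts, simple paths instead of trails, and the vertex version of Menger's theorem.

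For $\neg\text{(B)}$, suppose a bi-infinite open trail exists with positive probability. By countable additivity there is a vertex $z$ that lies on such a trail with positive probability. On that event the two halves of the trail are edge-disjoint open rays from $z$. Closing any single edge therefore leaves $z$ connected to infinity, so $\Pi_z=\varnothing$ while $I_z$ holds, and (A$'$) fails. For a fixed $x$ as in (E), I use Lemma~\ref{lem:energy}. On the event that $z$ lies on a bi-infinite open trail, I open a fixed finite path $\gamma$ from $x$ to $z$; this preserves the trail, so the modified event still has positive probability. Every pivot for $I_x$ must then separate $x$ from $z$ in the open cluster, since otherwise $x$ would still reach one of the two edge-disjoint rays. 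Hence every pivot lies on $\gamma$, so $|\Pi_x|\le|\gamma|<\infty$ on an event of positive probability, and (E) fails.

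For $\text{(B)}\Rightarrow\text{(BB)}$, suppose first that some infinite cluster has an infinite block $B$, and fix $v\in B$. For each $n$, no single edge separates $v$ from the part of $B$ outside the ball of radius $n$, because such an edge would be a bridge inside $B$. By Menger's theorem there are two edge-disjoint open paths from $v$ to distance $n$. Local finiteness and a König compactness argument give two edge-disjoint rays from $v$, and their union is a bi-infinite open trail. Suppose instead that all blocks are finite but the bridge-block tree has two ends. This tree is locally finite, so it contains a bi-infinite path. I lift it to a bi-infinite trail by joining consecutive bridges through the finite connected blocks between them; distinct bridges and distinct blocks ensure that no edge repeats.

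For $\text{(BB)}\Rightarrow\text{(A)}$, fix $x$ on $I_x$ and follow the ray in the one-ended bridge-block tree from the block of $x$. It crosses infinitely many bridges. Closing one of them leaves $x$ in a component of the tree that contains no ray, because the tree is one-ended. This component is locally finite, so by König's lemma it is finite, and since the blocks are finite, $C_x$ becomes finite. Thus every such bridge is a pivot and $|\Pi_x|=\infty$.

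For $p=p_c(G)$, suppose $p'<p$ and couple the two measures by independent uniform variables $U_e$, with $e$ open at level $u$ iff $U_e\le u$. Given the $p$-configuration, each $p$-open edge is $p'$-closed independently with probability $1-p'/p>0$. On $I_x$ there are infinitely many pivots, so almost surely one of them is $p'$-closed. That makes the $p$-cluster of $x$ minus this edge finite, and this set contains the $p'$-cluster of $x$. Hence $P_{p'}[I_x]=0$ for all $p'<p$, so $p=p_c(G)$.

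For uniqueness, if there were two infinite clusters with positive probability, I would choose vertices $a,b$ that lie in distinct infinite clusters with positive probability. I then open a fixed finite path between $a$ and $b$ and apply Lemma~\ref{lem:energy}. Tails of two rays, one in each former cluster, together with a suitably shortened connecting path, give a bi-infinite open trail, contradicting (B). The main care is needed in the Menger–compactness step and in this trail surgery; these are the steps I expect to be the main obstacle.
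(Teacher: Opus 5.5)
Your proof is correct, but it is organized differently from the paper's. The paper gets (E)$\Rightarrow$(A) and (A$'$)$\Rightarrow$(A) from two manipulations of the event $A_x=I_x\cap\{|\Pi_x|<\infty\}$:
\begin{itemize}
\item opening an $x$--$y$ path to pass from $A_x$ to $A_y$;
\item moving to a vertex on an open ray beyond all pivots to get $\Pi_y=\varnothing$.
\end{itemize}
It then uses Menger and K\"onig in the whole cluster on $\{\Pi_x=\varnothing\}$ for (B)$\Rightarrow$(A$'$). Finally it derives (BB) from (A$'$) with no further graph theory. An element of $\Pi_x$ is a bridge, respectively cutvertex, leaving $x$ in a finite component. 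This makes every block at $x$ finite and prevents $x$ from lying on a double ray between two ends.

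You replace both manipulations of $A_x$ by the single step $\neg$(B)$\Rightarrow\neg$(E): opening a path $\gamma$ from $x$ to a vertex on a bi-infinite open trail forces $\Pi_x\subseteq S(\gamma)$. You then prove (B)$\Rightarrow$(BB) deterministically, configuration by configuration. You apply Menger inside an infinite block, which is bridgeless, respectively has no cutvertex, and you lift a double ray of the bridge-block tree through the finite blocks.

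Your route needs fewer implications and shows (BB) as a purely graph-theoretic consequence of (B). The paper's route proves (A)$\Leftrightarrow$(E)$\Leftrightarrow$(A$'$) by pivot manipulations alone. That is exactly the part it repeats in Proposition~\ref{prop:mirrors}, where no analogue of (B) or (BB) is available.

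Two parts of the statement are not covered by your proposal.
\begin{itemize}
\item The clause ``for some, hence every, $x$'' needs the observation that opening a finite $x$--$y$ path and applying Lemma~\ref{lem:energy} turns $P_p[I_x]>0$ into $P_p[I_y]>0$.
\item ``Exactly one infinite cluster'' also requires existence, and your argument only excludes two infinite clusters. The existence of an infinite open cluster is invariant under changes of finitely many elements and has probability at least $P_p[I_x]>0$. Hence it has probability $1$ by Kolmogorov's zero-one law.
\end{itemize}
Also, the halves of a bi-infinite trail are one-sided infinite trails rather than rays. By local finiteness they still visit infinitely many vertices, so your arguments are unaffected.
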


\begin{definition}\label{def:perc}
If the hypothesis and the equivalent conditions of Proposition~\ref{prop:perc} hold, then $G$ is called \emph{$p$-fragile for bonds}, respectively \emph{for sites}.
\end{definition}

By (BB), every infinite open cluster is a one-ended tree of finite blocks. Seen from a fixed vertex $x$, the blocks on the ray from a block containing $x$ to the end form an infinite chain of finite ``sausages'', and the rest of the cluster consists of finite pieces attached to this chain. The same picture appears in Menshikov's proof of exponential decay below $p_c$, where the cluster of the origin conditioned to reach distance $n$ ``resembles a chain of sausages'' \cite[Section~5.2]{G}, \cite{Me}, and in the lace expansion at criticality \cite{HS}. Conditions like (B), on disjoint infinite open paths, also appear for incipient infinite clusters, which arise by conditioning and taking limits, see e.g.\ \cite{CFHJ}. On $p$-fragile graphs, (B) holds together with $P_p[I_x]>0$ under $P_p$ itself.

\begin{proof}[Proof of Proposition~\ref{prop:perc}]
Write $P=P_p$. An element is an edge in bond and a vertex in site percolation, and $S(\gamma)$ denotes the set of elements of a path $\gamma$. Opening a finite path from $x$ to $y$ and applying Lemma~\ref{lem:energy} shows that $P[I_x]>0$ implies $P[I_y]>0$. Clearly (A) implies (E) and (A$'$).

$\neg$(A)$\Rightarrow\neg$(E): Let $P[A_x]>0$, where $A_x:=I_x\cap\{|\Pi_x|<\infty\}$, and fix $y$. On $A_x$, open all elements of a finite $x$--$y$ path $\gamma$. This preserves $I_x$ and creates no new pivots for $I_x$. Outside $\gamma$, closing an element leaves $x$ and $y$ connected, so that their pivotalities agree. Hence $\Pi_y\subseteq\Pi_x\cup S(\gamma)$ after the change, and $P[A_y]>0$ by Lemma~\ref{lem:energy}.

$\neg$(A)$\Rightarrow\neg$(A$'$): On $A_x$, the finitely many pivots for $I_x$ lie on every open ray from $x$. Choose such a ray and a vertex $y$ on it beyond these pivots. A pivot for $I_y$ would lie on the ray beyond $y$, and closing it would leave $x$ connected to $y$ and destroy $I_x$, so that it would belong to $\Pi_x$. Hence $\Pi_y=\varnothing$, and by countability $P[I_y,\Pi_y=\varnothing]>0$ for some fixed $y$.

$\neg$(B)$\Rightarrow\neg$(A$'$): By countability, with positive probability a fixed vertex $x$ lies on a bi-infinite open trail, respectively simple path. Closing an edge, respectively a vertex other than $x$, leaves intact one of the two halves of this trail, respectively path, starting at $x$. Hence $P[I_x,\Pi_x=\varnothing]>0$.

$\neg$(A$'$)$\Rightarrow\neg$(B): Let $P[I_x,\Pi_x=\varnothing]>0$, and denote by $H$ the cluster $C_x$ on this event. For $n\ge1$, take the ball of radius $n$ about $x$ in $H$ together with its outer neighbors in $H$, and identify the latter to a single vertex $t$, keeping parallel edges. In this finite graph, $x$ and $t$ are not adjacent, and no edge, respectively vertex other than $x$ and $t$, separates them, since closing it would confine $C_x$ to the ball. By Menger's theorem \cite[Corollary~3.3.5]{D}, there are two $x$--$t$ paths that are edge-disjoint, respectively disjoint apart from $x$ and $t$. Delete $t$ and truncate both paths after $n$ edges. These pairs of paths, for $n\ge1$, and their simultaneous prefixes form a locally finite tree with vertices at every depth. By K\"onig's infinity lemma \cite[Lemma~8.1.2]{D}, it contains a ray, which yields two open rays from $x$ that are edge-disjoint, respectively disjoint apart from $x$. Together they form a bi-infinite open trail, respectively simple path.

(A$'$)$\Rightarrow$(BB): By countability, almost surely $\Pi_x\ne\varnothing$ for every vertex $x$ of every infinite open cluster $H$. An element of $\Pi_x$ is a bridge, respectively a cutvertex other than $x$, of $H$ whose deletion leaves $x$ in a finite component. Hence every block of $H$ containing $x$ is finite. As $x$ is arbitrary, the bridge-block, respectively block-cutvertex, tree of $H$ is infinite and locally finite, so that it has an end. It has only one: a vertex $x$ in a block on the double ray between two ends would remain connected to one of them after the deletion of any bridge, respectively of any cutvertex other than $x$, whence $\Pi_x=\varnothing$.

(BB)$\Rightarrow$(A): On $I_x$, the deletion of any of the infinitely many bridges, respectively cutvertices other than $x$, on the ray from a block containing $x$ to the end leaves $x$ in a component consisting of finitely many finite blocks. All of them are therefore pivots for $I_x$.

$p=p_c(G)$: For $0<u<p$, thin the $p$-open elements independently with retention probability $u/p$. Under (A), $I_x$ survives the thinning only if all its infinitely many pivots do, which has conditional probability $0$. Hence $P_u[I_x]=0$.

Existence and uniqueness: The existence of an infinite open cluster is invariant under changes of finitely many elements, and therefore has probability $1$ by Kolmogorov's zero-one law. If two infinite clusters occurred with positive probability, opening a shortest path between two rays in different infinite clusters would create a bi-infinite open simple path. By Lemma~\ref{lem:energy}, this contradicts (B).
\end{proof}

\section{Bundles on one-ended trees}\label{sec:trees}
Fix $0<p<1$ and write $q=1-p$. Let $T$ be a one-ended locally finite tree. The ancestor $a(x)$ of $x\in V(T)$ is its neighbor on the ray from $x$ to infinity. The tree of descendants of $x$, formed by all $y$ with $a^n(y)=x$ for some $n\ge0$, is finite, since otherwise it would contain a ray, which would give a second end. For $\bk=(k_x)_{x\in V(T)}\in\N^{V(T)}$, let $T_\bk$ be obtained from $T$ by replacing each edge $\{x,a(x)\}$ by $k_x$ parallel edges, and let $\bk^x:=(k_{a^n(x)})_{n\ge0}$. For the tree $\N_0$ with edges $\{n,n+1\}$, $n\ge0$, the ancestor of $n$ is $n+1$, and $(\N_0)_\bk$ has $k_n$ parallel edges between $n$ and $n+1$, see Figure~\ref{fig:chain}.

\begin{figure}[tb]
\centering
\begin{tikzpicture}[x=.95cm,y=.95cm,
  original/.style={blue!70!black,line width=.6pt},
  medial/.style={red!75!black,densely dashed,line width=.6pt},
  original vertex/.style={circle,fill=blue!70!black,inner sep=0pt,minimum size=4pt},
  medial vertex/.style={circle,fill=red!75!black,inner sep=0pt,minimum size=4pt}]
\foreach \n/\k in {0/2,1/3,2/1,3/4}{%
  \pgfmathtruncatemacro{\lastedge}{\k-1}%
  \foreach \j in {0,...,\lastedge}{%
    \pgfmathsetmacro{\h}{.36*(\k-1)-.72*\j}%
    \draw[original] (2*\n,0) .. controls ({2*\n+2/3},{4*\h/3}) and ({2*\n+4/3},{4*\h/3}) .. ({2*\n+2},0);}}
\foreach \n in {0,...,4}{\node[original vertex] at (2*\n,0) {};\node[below=4pt,font=\scriptsize] at (2*\n,0) {$\n$};}
\node[blue!70!black] at (8.55,0) {$\cdots$};
\bundlefaces{0}{2}\bundlefaces{1}{3}\bundlefaces{3}{4}
\draw[medial] (1,.36) .. controls (.5,1.2) and (-.22,1.05) .. (-.22,0) .. controls (-.22,-1.05) and (.5,-1.2) .. (1,-.36);
\node[red!75!black,left] at (-.22,0) {$\ee$};
\foreach \n/\h/\hh in {0/.36/.72,1/.72/0,2/0/1.08}{\foreach \s in {-1,1}{%
  \draw[medial] ({2*\n+1},{\s*\h}) .. controls ({2*\n+1.6},{\s*(\h+.62)}) and ({2*\n+2.4},{\s*(\hh+.62)}) .. ({2*\n+3},{\s*\hh});}}
\foreach \s in {-1,1}{\draw[medial] (7,{\s*1.08}) .. controls (7.4,{\s*1.5}) and (7.9,{\s*1.4}) .. (8.28,{\s*1.4});
  \node[red!75!black] at (8.55,{\s*1.4}) {$\cdots$};}
\foreach \n/\k in {0/2,1/3,2/1,3/4}{\pgfmathtruncatemacro{\lastedge}{\k-1}%
  \foreach \j in {0,...,\lastedge}{\node[medial vertex] at ({2*\n+1},{.36*(\k-1)-.72*\j}) {};}}
\end{tikzpicture}

\medskip
\begin{tikzpicture}[x=.95cm,y=.95cm,
  medial/.style={red!75!black,line width=.6pt},
  original vertex/.style={circle,fill=blue!70!black,inner sep=0pt,minimum size=3pt},
  medial vertex/.style={circle,fill=red!75!black,inner sep=0pt,minimum size=4pt}]
\fill[gray!30] (1,.36) .. controls (1.6,.98) and (2.4,1.34) .. (3,.72)
  .. controls (3.6,1.34) and (4.4,.62) .. (5,0)
  .. controls (5.6,.62) and (6.4,1.70) .. (7,1.08)
  .. controls (7.4,1.5) and (7.9,1.4) .. (8.28,1.4) -- (8.28,-1.4)
  .. controls (7.9,-1.4) and (7.4,-1.5) .. (7,-1.08)
  .. controls (6.4,-1.70) and (5.6,-.62) .. (5,0)
  .. controls (4.4,-.62) and (3.6,-1.34) .. (3,-.72)
  .. controls (2.4,-1.34) and (1.6,-.98) .. (1,-.36)
  .. controls (.5,-1.2) and (-.22,-1.05) .. (-.22,0)
  .. controls (-.22,1.05) and (.5,1.2) .. (1,.36) -- cycle;
\bundleeyes{0}{2}\bundleeyes{1}{3}\bundleeyes{3}{4}
\bundlefaces{0}{2}\bundlefaces{1}{3}\bundlefaces{3}{4}
\draw[medial] (1,.36) .. controls (.5,1.2) and (-.22,1.05) .. (-.22,0) .. controls (-.22,-1.05) and (.5,-1.2) .. (1,-.36);
\node[red!75!black,left] at (-.22,0) {$\ee$};
\foreach \n/\h/\hh in {0/.36/.72,1/.72/0,2/0/1.08}{\foreach \s in {-1,1}{%
  \draw[medial] ({2*\n+1},{\s*\h}) .. controls ({2*\n+1.6},{\s*(\h+.62)}) and ({2*\n+2.4},{\s*(\hh+.62)}) .. ({2*\n+3},{\s*\hh});}}
\foreach \s in {-1,1}{\draw[medial] (7,{\s*1.08}) .. controls (7.4,{\s*1.5}) and (7.9,{\s*1.4}) .. (8.28,{\s*1.4});
  \node[red!75!black] at (8.55,{\s*1.4}) {$\cdots$};}
\node[gray!60!black] at (8.55,0) {$\cdots$};
\foreach \n in {0,...,4}{\node[original vertex] at (2*\n,0) {};}
\foreach \n/\k in {0/2,1/3,2/1,3/4}{\pgfmathtruncatemacro{\lastedge}{\k-1}%
  \foreach \j in {0,...,\lastedge}{\node[medial vertex] at ({2*\n+1},{.36*(\k-1)-.72*\j}) {};}}
\end{tikzpicture}
\caption{Top: $(\N_0)_\bk$ for $\bk=(2,3,1,4,\ldots)$ (blue) and its medial graph $M((\N_0)_\bk)$ (red). The edge $\ee$ lies in the outer corner at the vertex $0$ of $(\N_0)_\bk$. Bottom: $M((\N_0)_\bk)$ with its black faces shaded; they contain the vertices of $(\N_0)_\bk$.}
\label{fig:chain}
\end{figure}

\begin{proposition}\label{prop:bundle}
The multigraph $(\N_0)_\bk$ is $p$-fragile for bonds if and only if
\begin{equation}\label{eq:series}
\sum_{n\ge0}q^{k_n}<\infty\qquad\text{and}\qquad\sum_{n\ge0}k_nq^{k_n}=\infty.
\end{equation}
\end{proposition}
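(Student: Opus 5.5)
Bond percolation on $(\N_0)_\bk$ reduces to a sequence of independent bundles. Let $B_n$ be the event that at least one of the $k_n$ parallel edges between $n$ and $n+1$ is open. These events are independent with $P_p[B_n^c]=q^{k_n}$.

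The open cluster of $0$ is infinite exactly when every $B_n$ occurs. Hence
\[
P_p[I_0]=\prod_{n\ge0}\bigl(1-q^{k_n}\bigr),
\]
which is positive if and only if $\sum_n q^{k_n}<\infty$, since each factor lies in $(0,1)$. So the first condition in \eqref{eq:series} is equivalent to the hypothesis $P_p[I_x]>0$ of Proposition~\ref{prop:perc}, and it is necessary for fragility.

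Assume now that the first series converges. By Proposition~\ref{prop:perc} it suffices to check condition (E) for $x=0$. On $I_0$, an edge is pivotal for $I_0$ precisely when it is the unique open edge of its bundle. Closing an edge in a bundle with at least two open edges changes nothing. Closing the only open edge of a bundle isolates $\{0,\dots,n\}$, which is finite.

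Let $U_n$ be the event that exactly one edge of bundle $n$ is open. Then $|\Pi_0|=\infty$ on $I_0$ exactly when infinitely many $U_n$ occur. Conditionally on $I_0$ the bundles remain independent, since $I_0=\bigcap_n B_n$ is a product event. The conditional probabilities are
\[
P_p[U_n\mid B_n]=\frac{k_npq^{k_n-1}}{1-q^{k_n}}.
\]

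Because $q^{k_n}\to0$, the denominators tend to $1$, so these probabilities are comparable to $k_nq^{k_n}$ up to the constant factor $p/q$. By the second Borel--Cantelli lemma (for divergence) and the first (for convergence), infinitely many $U_n$ occur almost surely on $I_0$ if $\sum k_nq^{k_n}=\infty$. If instead $\sum k_nq^{k_n}<\infty$, only finitely many occur almost surely. In the divergent case (E) holds, so $(\N_0)_\bk$ is $p$-fragile. In the convergent case $P_p[|\Pi_0|<\infty\mid I_0]=1>0$, so (A) fails, and by Proposition~\ref{prop:perc} the graph is not fragile.

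The only points needing care are two. First, the characterization of pivots as sole open edges of a bundle, including the fact that a bundle with $k_n=1$ always contributes a pivot on $I_0$; this is consistent, since then $k_nq^{k_n}=q$. Second, the conditional independence given $I_0$. Both are elementary.
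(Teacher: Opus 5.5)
Your proposal is correct and follows the paper's own proof. Both use the product formula for $P_p[I_0]$, identify $\Pi_0$ on $I_0$ with the bundles that contain exactly one open edge, and combine the two Borel--Cantelli lemmas with Proposition~\ref{prop:perc}. The only cosmetic difference is that the paper applies Borel--Cantelli to the unconditional probabilities $k_npq^{k_n-1}$ of your events $U_n$. This makes your conditional-independence step unnecessary, because almost sure and null events remain so after conditioning on $I_0$ when $P_p[I_0]>0$.
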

\begin{proof}
The numbers $Y_n$ of open edges between $n$ and $n+1$ are independent with law $\Bin(k_n,p)$. Hence
\begin{equation}\label{eq:product}
P_p[I_0]=\prod_{n\ge0}(1-q^{k_n})>0\quad\Longleftrightarrow\quad\sum_{n\ge0}q^{k_n}<\infty.
\end{equation}
On $I_0$, $|\Pi_0|=\sum_n\mathbf 1_{\{Y_n=1\}}$, and $P_p[Y_n=1]=k_npq^{k_n-1}$. The Borel--Cantelli lemmas and Proposition~\ref{prop:perc} complete the proof.
\end{proof}

For $p=\frac12$, the ``if'' part is due to Lyons and Schramm \cite[p.~1816]{LS}, as a variation of an example of H\"aggstr\"om and Peres: for blocks of $k_n$ independent fair bits, the event that some block consists of ones only has, with positive probability, infinitely many pivotal bits. Interchanging zeros and ones and passing to the complement turns this event into $I_0$. Angel and Hutchcroft \cite[Proposition~3.2]{AH} use bundles on a canopy tree to obtain percolation at criticality, see Example~\ref{ex:canopy}.

\begin{example}\label{ex:log}
For $\gamma>1$, let
\begin{equation}\label{eq:log}
k_n=\bigl\lceil\log_{1/q}\bigl(n(\log n)^\gamma\bigr)\bigr\rceil\qquad\text{for all sufficiently large }n.
\end{equation}
Then $q^{k_n}$ is bounded from above and below by positive multiples of $1/(n(\log n)^\gamma)$, and $k_n$ is of order $\log n$. Hence \eqref{eq:series} holds if and only if $\gamma\le2$. Since moreover $\sum_n(1-u)^{k_n}=\infty$ for $u<p$, \eqref{eq:product} shows that $(\N_0)_\bk$ percolates at $p=p_c((\N_0)_\bk)$. For $\gamma=2$, the remaining $k_n$ can be chosen such that $k_n\le c_p\log(n+2)$ for all $n$.
\end{example}

\begin{proposition}\label{prop:det-tree}
Let $\bk\in\N^{V(T)}$ and $x\in V(T)$. Then $T_\bk$ is $p$-fragile for bonds if and only if $(\N_0)_{\bk^x}$ is.
\end{proposition}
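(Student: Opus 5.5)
The approach is to compare bond percolation on $T_\bk$ with bond percolation on the chain $(\N_0)_{\bk^x}$, which is naturally embedded in $T_\bk$. Write $x_n:=a^n(x)$; the ray $x_0,x_1,\dots$ together with its bundles is a copy of $(\N_0)_{\bk^x}$, with $n\mapsto x_n$. Every other vertex of $T$ lies in the finite descendant tree of some $x_n$ with $n$ large enough. Call this copy $R$.

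\textbf{Percolation.} First I would show that $P_p[I_x]>0$ on $T_\bk$ if and only if $P_p[I_0]>0$ on $(\N_0)_{\bk^x}$. Since $T$ is one-ended, any open ray from $x$ in $T_\bk$ has a tail on the unique end. Every ray in $T$ starting at $x$ eventually coincides with the ray $x_0,x_1,\dots$. In fact, because $T$ is a tree and the descendant trees are finite, a path from $x$ to infinity must cross each bundle $\{x_n,x_{n+1}\}$. Hence $I_x$ on $T_\bk$ occurs if and only if every bundle $\{x_n,x_{n+1}\}$ contains an open edge. This is exactly $I_0$ on $(\N_0)_{\bk^x}$ under the identification, so the two events have the same probability. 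Condition (A) is independent of the chosen vertex, so by Proposition~\ref{prop:perc} the hypothesis "percolates" is equivalent on both graphs.

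\textbf{Pivots.} Next, on $I_x$, I would identify $\Pi_x$ in $T_\bk$ with $\Pi_0$ in $(\N_0)_{\bk^x}$. An edge off the ray $R$ lies in a finite descendant subtree hanging off the ray. Closing it cannot disconnect $x$ from infinity, since $x$'s route to infinity uses only ray bundles. An edge in the bundle $\{x_n,x_{n+1}\}$ is pivotal if and only if it is the unique open edge of that bundle, which is the same criterion as in the chain. Thus $|\Pi_x|=\sum_n\mathbf 1_{\{Y_n=1\}}$ on both graphs, with identical laws. Consequently condition (E) of Proposition~\ref{prop:perc} at the vertex $x$ holds for $T_\bk$ if and only if it holds at $0$ for $(\N_0)_{\bk^x}$. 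By Proposition~\ref{prop:perc}, (E) is equivalent to fragility on each graph, so the two fragility properties coincide.

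\textbf{Main obstacle.} The only delicate point is the claim that every infinite open path from $x$ must use every ray bundle. This rests on the finiteness of descendant trees noted before the statement: an infinite self-avoiding path in the tree must leave every finite descendant tree. The only exit from the descendant tree of $x_n$ is through the bundle $\{x_n,x_{n+1}\}$, because removing that bundle separates this finite set from the rest of the tree.
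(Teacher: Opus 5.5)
Your proof is correct and follows the paper's argument: the finite branches off the ray $(a^n(x))_{n\ge0}$ affect neither $I_x$ nor $\Pi_x$, so both agree with their counterparts on the embedded copy of $(\N_0)_{\bk^x}$, and Proposition~\ref{prop:perc} reduces fragility of either graph to $P_p[I_x]>0$ together with $P_p[|\Pi_x|=\infty\mid I_x]=1$. Two small misattributions: the vertex-independence you need for the percolation hypothesis is that of $P_p[I_x]>0$ (the ``some, hence every'' in Proposition~\ref{prop:perc}), not that of condition (A); and what you call (E) at $x$ is really the pointwise condition at $x$, which lies between (A) and (E).
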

\begin{proof}
The branches off the ray $(a^n(x))_{n\ge0}$ consist of descendants of its vertices and are therefore finite. Deleting them, together with their bundles, leaves a copy of $(\N_0)_{\bk^x}$, with $x$ in place of $0$, and preserves $I_x$ and $\Pi_x$. By Proposition~\ref{prop:perc}, each of the two multigraphs is $p$-fragile for bonds if and only if $P_p[I_x]>0$ and $P_p[|\Pi_x|=\infty\mid I_x]=1$.
\end{proof}

\begin{example}\label{ex:canopy}
The $d$-ary canopy tree, $d\ge2$, has levels $0,1,\ldots$; each vertex at level $n\ge1$ has $d$ neighbors at level $n-1$ and one at level $n+1$, and the vertices at level $0$ are leaves. Replace each edge between the levels $n$ and $n+1$ by $k_n$ parallel edges, with $k_n$ as in \eqref{eq:log} for $q=\frac12$ and some $\gamma>1$. Angel and Hutchcroft \cite[Proposition~3.2]{AH} showed that this unimodular random graph has $p_c=\frac12$ and percolates at $p_c$. For $x$ at level $0$, $\bk^x=(k_n)_{n\ge0}$. By Example~\ref{ex:log} and Propositions~\ref{prop:bundle} and~\ref{prop:det-tree}, the graph is $\frac12$-fragile for bonds if and only if $\gamma\le2$.
\end{example}

Now let $(\TT,\rho)$ be a random rooted one-ended locally finite tree, defined on a probability space with probability measure $\PP$. Denote its ancestor map by $A$, and let
\begin{equation}\label{eq:height}
H(x):=\max\{n\ge0:\ A^n(y)=x\text{ for some }y\in V(\TT)\}
\end{equation}
be the height of the tree of descendants of $x$. Then $H(x)<\infty$ and $H(A(x))>H(x)$.

\begin{theorem}\label{th:tree}
Let $0<p<1$. There is a function $g:\N_0\times\N\to\N$, depending only on $p$ and on the laws of $H(A^n(\rho))$, $n\ge0$, such that $\TT_\bK$ with
\begin{equation}\label{eq:K}
K_x:=g\bigl(H(x),H(A(x))\bigr)
\end{equation}
is almost surely $p$-fragile for bonds, and $g(a,b)\le c_p\log(a+2)$ for all $a,b$, so that
\begin{equation}\label{eq:bound}
K_x\le c_p\log\bigl(H(x)+2\bigr)\qquad\text{for all }x\in V(\TT),
\end{equation}
where $c_p<\infty$ depends only on $p$.
\end{theorem}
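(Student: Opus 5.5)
By Proposition~\ref{prop:det-tree}, applied with $x=\rho$, it suffices to control the sequence
\[
k_n:=K_{A^n(\rho)}=g\bigl(h_n,h_{n+1}\bigr),\qquad h_n:=H(A^n(\rho)).
\]
The heights $h_n$ are strictly increasing integers with $h_n\ge n$. By Proposition~\ref{prop:bundle}, I must choose $g$ so that, $\PP$-almost surely, \eqref{eq:series} holds for $(k_n)$. Writing $u_n:=q^{k_n}$, this reads
\[
\sum_n u_n<\infty\qquad\text{and}\qquad\sum_n u_n\log(1/u_n)=\infty .
\]
The bound \eqref{eq:bound} is the requirement $u_n\ge (h_n+2)^{-c}$. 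The model case is Example~\ref{ex:log}: if $h_n=n$, then $u_n\asymp 1/(n\log^2 n)$ works. The difficulty is that the index $n$ is not available to $g$; only the two heights are.

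First I would try a purely deterministic choice. Fix the weight $w(m):=1/((m+2)\log^2(m+2))$, and let $q^{g(a,b)}$ be, up to a factor $q$ from rounding, the mass
\[
W(a,b):=\sum_{a\le m<b}w(m),
\]
truncated above so that $g\ge1$ and below so that $q^{g(a,b)}\ge w(a)$. The lower truncation gives $g(a,b)\le c_p\log(a+2)$. The intervals $[h_n,h_{n+1})$ are disjoint, so $\sum_n u_n\le \sum_m w(m)<\infty$ deterministically. For the second series, every $n$ with a moderate gap, say $h_{n+1}-h_n\le h_n^{1/2}$, contributes at least $W(h_n,h_{n+1})\cdot\tfrac12\log h_n$ up to constants. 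These contributions then add up to at least a constant times $\sum_m 1/(m\log m)=\infty$ over the union of such intervals.

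So the deterministic choice succeeds unless the heights along the ray grow so fast that the union of moderate-gap intervals has $w$-weighted logarithmic mass that is finite. This is the case I expect to be the main obstacle, and it is where the dependence of $g$ on the laws of $H(A^n(\rho))$ must enter. In that regime there are few ray vertices, and each of them should receive a larger weight $u$ than $W$, which forces one to know how sparse the ray is typically.

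My plan for this case is an inductive, block-by-block choice. Split the heights into scales $[2^{2^j},2^{2^{j+1}})$. For each scale $j$, use the law of the finite vector of heights falling in that scale to pick a rule $g(a,\cdot)$ for $a$ in the scale. The rule should make the contribution of scale $j$ to $\sum_n u_n$ at most $2^{-j}$ outside an event of probability at most $2^{-j}$. It should also make the contribution to $\sum_n u_n\log(1/u_n)$ at least $1$ outside an event of probability at most $2^{-j}$. Roughly, this amounts to giving each ray vertex in the scale weight about $1/(N_j\, 2^{j}\log N_j)$, where $N_j$ is a deterministic high-probability bound on the number of ray vertices in the scale; the dependence on $b$ lets one read off local sparsity. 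Since $\log(1/u)$ is of order $2^j$ there, this weight remains compatible with $g\le c_p\log(a+2)$. Borel--Cantelli over $j$ then yields \eqref{eq:series} $\PP$-almost surely. Proposition~\ref{prop:bundle} and Proposition~\ref{prop:det-tree} finish the proof.

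The delicate point is balancing the two requirements when $N_j$ is not concentrated. I would handle this by splitting each scale further according to the gap $b-a$, so that a vertex with a large gap takes the mass of its whole gap, as in the deterministic rule above.
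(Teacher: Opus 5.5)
Your reduction to the two series along the ray from $\rho$ via Propositions~\ref{prop:det-tree} and~\ref{prop:bundle} is correct. Your deterministic rule $q^{g(a,b)}\approx W(a,b)$ does give $\sum_n u_n\le\sum_m w(m)$ and \eqref{eq:bound}. As you say, it fails when the ray is sparse. For example, attach to each $A^n(\rho)$ a descending path of length $\lceil e^n\rceil$, so that $h_n\approx e^n$. Then $u_n\asymp W(h_n,h_{n+1})\asymp n^{-2}$, and $\sum_n u_n\log(1/u_n)<\infty$.

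\textbf{The gap.} The law-dependent rule is the real content of the theorem, and you do not supply it; the version you sketch cannot work. Take a scale containing $N$ ray vertices and carrying total mass at most $2^{-j}$. By concavity of $u\mapsto u\log(1/u)$, its contribution to the second series is at most $2^{-j}\log(2^jN)$. This is at least $1$ only if $N\ge 2^{-j}e^{2^j}$.
\begin{itemize}
\item In the example above, the scale $[2^{2^j},2^{2^{j+1}})$ contains only about $2^j$ ray vertices, so every scale contributes $O(j2^{-j})$.
\item Under faster growth, most scales contain no ray vertex at all, so your per-scale target is unattainable.
\item Even in dense scales, your weight $1/(N_j2^j\log N_j)$ yields only $O(j2^{-j})$.
\end{itemize}
The budget must follow the position of a vertex along the ray, not its height: for $h_n\approx e^n$ the correct weights $1/(n\log^2 n)$ give scale masses of order $j^{-2}$, not $2^{-j}$. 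You also leave the case of non-concentrated $N_j$ (mixtures of growth regimes) open. Your suggested fix by splitting according to $b-a$ does not help: in the example every gap is large, and giving a vertex the mass of its gap is exactly the deterministic rule that fails.

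\textbf{The missing idea} is a way to recover a position along the ray from $(a,b)$ alone. With $X_n:=H(A^n(\rho))$, the paper chooses thresholds $0=t_0<t_1<\cdots$ with $\PP[X_{t_{i-1}+1}>t_i]\le2^{-i}$. It sets $g(a,b):=k_{f(a,b)}$, where $(k_i)$ is the sequence of Example~\ref{ex:log} and $f(a,b)$ is the smallest $i$ with $t_i\in[a,b)$, or $a$ if there is none.
\begin{itemize}
\item Upper bound: the intervals $[X_n,X_{n+1})$ are disjoint and the $X_n$ are distinct, so each index is used at most twice. Hence $\sum_nq^{K_{A^n(\rho)}}\le2\sum_iq^{k_i}<\infty$ for every realization.
\item Lower bound: by Borel--Cantelli, for large $i$ the largest $N_i$ with $X_{N_i}\le t_i$ exceeds $t_{i-1}$. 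Hence $X_{N_i}\ge N_i>t_{i-1}$, so $t_i$ is the first threshold in $[X_{N_i},X_{N_i+1})$ and $k_i$ actually occurs. This gives $\sum_nK_{A^n(\rho)}q^{K_{A^n(\rho)}}\ge\sum_{i\text{ large}}k_iq^{k_i}=\infty$.
\item Bound: $f(a,b)\le a$ yields \eqref{eq:bound}.
\end{itemize}
This handles all growth regimes and their mixtures at once, and uses only the marginal laws of the $X_n$.
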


\begin{figure}[tb]
\centering
\begin{tikzpicture}[x=9mm,y=4.3mm,
  kb/.style={red!75!black,line width=.6pt},
  kg/.style={gray!55,line width=2.4pt},
  thr/.style={gray,dashed,line width=.4pt},
  dia/.style={gray!70,dotted,line width=.7pt},
  sv/.style={circle,fill=black,inner sep=0pt,minimum size=3pt}]
\foreach \y/\i in {1/1,2/2,5/3,9/4}{
  \draw[thr] (0,\y)--(6.4,\y);
  \node[left,font=\scriptsize] at (0,\y) {$t_\i$};}
\node[left,font=\scriptsize] at (0,0) {$t_0$};
\foreach \n in {0,...,6}{\draw (\n,0)--(\n,-.25) node[below,font=\scriptsize,inner sep=1pt] {$\n$};}
\draw[dia] (0,0)--(6.4,6.4);
\draw[-{Stealth[length=3pt]}] (0,0)--(6.8,0) node[right,font=\scriptsize] {$n$};
\draw[-{Stealth[length=3pt]}] (0,0)--(0,10.7) node[above,font=\scriptsize] {$X_n$};
\foreach \a/\b/\c/\d/\l in {1/3/2/4/3,3/6/4/7/6,4/7/5/8/7}{
  \draw[kg] (\a,\b)--(\c,\d);
  \node[font=\scriptsize,gray!60!black] at ($(\a,\b)!.5!(\c,\d)!3.2mm!90:(\c,\d)$) {$k_\l$};}
\foreach \a/\b/\c/\d/\i/\m/\s/\o in {0/0/1/3/0/2/.7/3.6mm,2/4/3/6/3/3/.5/4.2mm,5/8/6/10/4/4/.5/4.8mm}{
  \foreach \j in {1,...,\m}{
    \pgfmathsetmacro{\bnd}{(\j-(\m+1)/2)*25}
    \draw[kb] (\a,\b) to[bend left=\bnd] (\c,\d);}
  \node[font=\scriptsize,red!75!black] at ($(\a,\b)!\s!(\c,\d)!\o!-90:(\c,\d)$) {$k_\i$};}
\foreach \a/\b in {0/0,1/3,2/4,3/6,4/7,5/8,6/10}{\node[sv] at (\a,\b) {};}
\end{tikzpicture}
\caption{The bundles along the ray from $\rho$ in the proof of Theorem~\ref{th:tree}, drawn schematically; the bundle between $A^n(\rho)$ and $A^{n+1}(\rho)$ is drawn from $(n,X_n)$ to $(n+1,X_{n+1})$. Red: $k_i$ edges, where $i$ is the smallest index with $t_i\in[X_n,X_{n+1})$. Gray: $k_{X_n}$ edges if there is no such $i$. Dotted: the diagonal. Here $t_0,t_1,t_2\in[X_0,X_1)$, so that $k_1$ and $k_2$ do not occur, and $k_3$ occurs twice.}
\label{fig:staircase}
\end{figure}

\begin{proof}
Choose $(k_i)_{i\ge0}$ as at the end of Example~\ref{ex:log}, and let $X_n:=H(A^n(\rho))$. Then $(X_n)_{n\ge0}$ is strictly increasing, and $X_n\ge n$. Choose integers $0=t_0<t_1<\cdots$ recursively such that
\begin{equation}\label{eq:threshold}
\PP[X_{t_{i-1}+1}>t_i]\le2^{-i}\qquad(i\ge1).
\end{equation}
Let $f(a,b)$ be the smallest $i\ge0$ with $a\le t_i<b$, and $f(a,b):=a$ if there is no such $i$. Set $g(a,b):=k_{f(a,b)}$. Then $K_{A^n(\rho)}=k_{f(X_n,X_{n+1})}$, see Figure~\ref{fig:staircase}.

Since the intervals $[X_n,X_{n+1})$ are disjoint and the $X_n$ are distinct, every integer occurs at most twice among the values $f(X_n,X_{n+1})$, $n\ge0$. Hence
\begin{equation}\label{eq:sum-upper}
\sum_{n\ge0}q^{K_{A^n(\rho)}}\le2\sum_{i\ge0}q^{k_i}<\infty.
\end{equation}
By \eqref{eq:threshold} and the Borel--Cantelli lemma, almost surely $X_{t_{i-1}+1}\le t_i$ for all sufficiently large $i$. For such $i$, let $N_i$ be the largest $n$ with $X_n\le t_i$. Then $N_i>t_{i-1}$ and $X_{N_i}\le t_i<X_{N_i+1}$. As $t_j\le t_{i-1}<N_i\le X_{N_i}$ for all $j<i$, it follows that $f(X_{N_i},X_{N_i+1})=i$. In particular, the $N_i$ are distinct, and
\[
\sum_{n\ge0}K_{A^n(\rho)}q^{K_{A^n(\rho)}}\ge\sum_{i\text{ large}}k_iq^{k_i}=\infty.
\]
Propositions~\ref{prop:bundle} and~\ref{prop:det-tree} with $x:=\rho$ yield fragility. If $f(a,b)=i\ge1$ with $a\le t_i<b$, then $t_{i-1}<a$ by the minimality of $i$, so that $i-1\le t_{i-1}<a$. Hence $f(a,b)\le a$, and $g(a,b)\le c_p\log(f(a,b)+2)\le c_p\log(a+2)$.
\end{proof}

\begin{corollary}\label{cor:stationary}
Let $\TT$ be a random one-ended subtree of $\Z^d$, let $\rho$ be its first vertex in a fixed enumeration of $\Z^d$, and let $g$ and $\bK$ be as in Theorem~\ref{th:tree}. Then $\TT_\bK$ is a factor of $\TT$, and $\EE[K_0;\,0\in V(\TT)]<\infty$ if $\EE[\log(H(0)+2);\,0\in V(\TT)]<\infty$. If $\TT$ is stationary, then $\EE[\deg_{\TT_\bK}(0);\,0\in V(\TT)]=2\EE[K_0;\,0\in V(\TT)]$.
\end{corollary}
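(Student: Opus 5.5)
The corollary has three parts: measurability and equivariance, the moment bound, and the degree identity. I will take them in that order.

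\emph{Factor property.} I will check that $\TT_\bK$ is a factor of $\TT$, i.e.\ that $\bK$ is a measurable function of $\TT$ commuting with the symmetries of $\Z^d$. The ancestor map $A$ and the heights $H(x)$ are defined intrinsically from the tree through \eqref{eq:height}, so $\sigma$ maps $A$ and $H$ of $T$ to those of $\sigma T$. The only possible dependence on the root $\rho$ enters through $g$. By Theorem~\ref{th:tree}, however, $g$ depends only on $p$ and on the laws of $X_n=H(A^n(\rho))$. These laws are fixed numbers attached to the law of $\TT$, not functions of the realization. Hence $\Phi(T):=(T,(g(H(x),H(A(x))))_{x})$ is a fixed deterministic map. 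Since $g$ is applied pointwise to equivariant quantities, $\Phi(\sigma T)=\sigma\Phi(T)$. Measurability holds because $H(x)$ and $A(x)$ are measurable functions of the edge indicators: $H(x)\ge n$ is a countable union of events involving finitely many edges, and likewise for $A(x)=y$. The choice of $\rho$ as the first vertex in an enumeration only serves to make $(\TT,\rho)$ a random rooted tree, so that Theorem~\ref{th:tree} applies. One subtlety is that the drawing of parallel edges is prescribed uniformly and symmetrically, so the embedded multigraph is equivariant as well.

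\emph{Moment bound.} By \eqref{eq:bound}, on $\{0\in V(\TT)\}$ we have $K_0\le c_p\log(H(0)+2)$. Taking expectations immediately gives $\EE[K_0;0\in V(\TT)]\le c_p\EE[\log(H(0)+2);0\in V(\TT)]<\infty$.

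\emph{Degree identity.} This uses the mass transport principle for $\Z^d$-stationary random objects. For a vertex $x\in V(\TT)$,
\[
\deg_{\TT_\bK}(x)=K_x+\sum_{y:\,A(y)=x}K_y .
\]
The first term is $K_0$ for $x=0$. For the second term, I will define the transport $f(y,x):=K_y\mathbf 1\{y\in V(\TT),\,A(y)=x\}$. This is a diagonally invariant function of $(\TT,x,y)$, since $\bK$ is a factor. Under translation stationarity, the mass transport principle on $\Z^d$ gives $\EE\sum_y f(y,0)=\EE\sum_x f(0,x)$; one proves it by writing both sides as $\sum_z\EE f(z,0)$ and using $\EE f(z,0)=\EE f(0,-z)$. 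Each vertex has exactly one ancestor, so the right side equals $\EE[K_0;0\in V(\TT)]$. Adding the two contributions yields the identity, with both sides possibly infinite; this is fine because all terms are nonnegative.

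The main obstacle, such as it is, lies in the factor claim: one must make sure that the root enters only through the deterministic function $g$ and not through the realization. Everything else is direct.
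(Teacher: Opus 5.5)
Your proposal is correct and follows essentially the same route as the paper. The factor property comes from the equivariance of $A$ and $H$ together with the fact that $g$ is deterministic, and the moment bound comes from \eqref{eq:bound}. For the degree identity, the paper translates each term $\EE[K_y;\,y\in V(\TT),\,A(y)=0]$ to $\EE[K_0;\,0\in V(\TT),\,A(0)=-y]$ and sums over $y\sim0$, which is exactly the nearest-neighbour mass transport step you describe.
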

\begin{proof}
The maps $A$ and $H$ commute with all symmetries of $\Z^d$, and $g$ is deterministic. The moment bound follows from \eqref{eq:bound}. Under stationarity, with $V:=V(\TT)$,
\begin{align*}
\EE\bigl[\deg_{\TT_\bK}(0);\,0\in V\bigr]&=\EE[K_0;\,0\in V]+\sum_{y\sim0}\EE\bigl[K_y;\,y\in V,\,A(y)=0\bigr]\\
&=\EE[K_0;\,0\in V]+\sum_{y\sim0}\EE\bigl[K_0;\,0\in V,\,A(0)=-y\bigr]=2\EE[K_0;\,0\in V].\qedhere
\end{align*}
\end{proof}

Every stationary one-ended spanning tree of $\Z^2$ satisfies $\liminf_nn\PP[H(0)\ge n]>0$ \cite[Theorem~1]{BZZ}, so that $\EE[H(0)]=\infty$. Finite expected degree therefore requires multiplicities that grow much more slowly than $H$, as in \eqref{eq:bound}. The uniform spanning tree of $\Z^2$ is one-ended by \cite{Pe} and \cite[Theorem~10.1]{BLPS}, its law is invariant under all symmetries of $\Z^2$ and mixing \cite[Section~8]{BLPS}, and $\PP[H(0)\ge n]=n^{-3/5+o(1)}$ by \cite[Remark~3.6 and Table~1]{LLS}. In particular, $\EE[\log(H(0)+2)]<\infty$. The north-east tree, in which each vertex independently chooses its upper or its right neighbor as its ancestor, is a stationary one-ended spanning tree of $\Z^2$, and $\PP[H(0)\ge n]\asymp n^{-1/2}$ follows easily from the fact that the paths to infinity in this tree and in its dual tree are coalescing random walks \cite{ZM}.

\section{Mirrors on plane 4-regular graphs}\label{sec:mirrors}
Let $M$ be a connected 4-regular multigraph embedded in $\R^2$ such that every compact set meets only finitely many edges. Its faces can be colored black and white such that every edge separates a black face from a white one, because a closed curve that crosses the edges transversally encloses finitely many vertices, all of even degree, and therefore crosses an even number of edges. We fix such a coloring, which is unique up to interchanging the colors. At each vertex, the four corners between consecutive half-edges are alternately black and white.

There are three ways to pair the four half-edges at a vertex, cf.\ \cite[Section~17.8]{GR}. In the state $\bl$, the half-edges are paired around the two white corners, in the state $\wh$ around the two black corners, and in the state $\crossstate$ opposite half-edges are paired. The states $\bl$ and $\wh$ are two-sided mirrors, and we call $\crossstate$ a crossing. For $p,r>0$ with $p+r\le1$, let $P_{p,r}$ be the product measure under which the vertices of $M$ are independently in the states $\bl$, $\wh$ and $\crossstate$ with probabilities $p$, $r$ and $1-p-r$. For $p=r$, the coloring plays no role.

Now let $G$ be a connected multigraph embedded in $\R^2$ such that every compact set meets only finitely many edges. Its medial graph $M(G)$ has a vertex $v_e$ at the midpoint of each edge $e$ of $G$. Each corner between consecutive edge ends at a vertex of $G$ gives an edge of $M(G)$, which joins the corresponding midpoints through this corner; at a vertex of degree one, this is a loop. Thus $M(G)$ is a 4-regular plane multigraph as above, and we color its faces black if they contain a vertex of $G$, and white otherwise, in which case they correspond to faces of $G$, see Figure~\ref{fig:chain}. Of the four corners at $v_e$, the two black ones contain the two halves of $e$, and the two white ones those of the dual edge $e^*$. Hence $\bl$ is a mirror along $e$ and $\wh$ a mirror along $e^*$, and declaring $e$ open if $v_e$ is in the state $\bl$ gives Bernoulli bond percolation on $G$ with parameter $p$. If $G$ is the square lattice formed by the centers of the black squares of a chessboard coloring of $\Z^2$, then $M(G)=\Z^2$, and $P_{p,p}$ is the Lorentz mirror model of Section~1.

Following the pairings partitions the edges of $M$ into trajectories, each of which is a finite closed trail or a bi-infinite trail. For an edge $\ee$ of $M$, let $C_\ee$ be its trajectory, $I_\ee:=\{|C_\ee|=\infty\}$, and let $\Pi_\ee$ be the set of vertices at which a change to another state of positive probability changes whether $I_\ee$ occurs. On $I_\ee$, removing $\ee$ splits $C_\ee$ into two halves.

\begin{lemma}\label{lem:meeting}
On $I_\ee$, a vertex belongs to $\Pi_\ee$ if and only if both halves of $C_\ee$ visit it. If $C_\ee=C_\ff$, then $|\Pi_\ee\triangle\Pi_\ff|<\infty$.
\end{lemma}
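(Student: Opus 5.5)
The plan is to analyse directly how the three states at a single vertex $v$ pair up the trajectory strands through $v$. Fix a configuration in $I_\ee$, so $C_\ee$ is a bi-infinite trail. Removing $\ee$ leaves two one-sided infinite halves $C^+$ and $C^-$, each starting at an endpoint of $\ee$.

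\emph{First case: $v$ is visited by $C_\ee$ at most once, or only by one half.} Suppose first that $C_\ee$ passes through $v$ at most once. Then the two half-edges at $v$ not used by $C_\ee$ belong to a single other trajectory $D$, or to none. Changing the state at $v$ re-pairs four half-edge ends. If the trajectory passes through $v$ once, the resulting new trajectory through $\ee$ consists of:
\begin{itemize}
\item one half of $C_\ee$, cut at $v$;
\item a piece of $D$;
\item possibly the other half of $C_\ee$.
\end{itemize}
In every case the new trajectory of $\ee$ contains a full infinite tail of $C^+$ or $C^-$, namely the part on the side of $\ee$ away from $v$. So it remains infinite.

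If both visits to $v$ come from the same half, say $C^+$, then $C^-$ does not pass through $v$. So $C^-$, together with $\ee$ and the initial segment of $C^+$ up to $v$, is untouched by the change. Again the new $C_\ee$ is infinite. Hence such $v\notin\Pi_\ee$.

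\emph{Second case: both halves visit $v$.} Now $C_\ee$ uses all four half-edges at $v$, entering once via $C^+$ and once via $C^-$. I will write $C_\ee$ as $R^-\,\alpha\,L\,\beta\,R^+$. Here $L$ is the finite loop-like segment of $C_\ee$ between the two visits to $v$; it contains $\ee$. The two infinite tails $R^\pm$ leave $v$, and $\alpha,\beta$ are half-edges at $v$. The current state pairs the half-edges $(R^-,\alpha)$ and $(\beta,R^+)$. The other two states pair either $(\alpha,\beta)$ with $(R^-,R^+)$, or $(\alpha,R^+)$ with $(R^-,\beta)$.

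The first alternative closes $L$ into a finite closed trail containing $\ee$, which destroys $I_\ee$. The obstacle is that this particular re-pairing must have positive probability. Since $p,r>0$ and $1-p-r\ge0$, at least two of the three states, including both mirrors, have positive probability. Among the two alternatives to the current state, I check which one closes $L$, using planarity and the crossing structure. The key observation to verify is that the pairing producing a finite loop is always one of the two mirror states relative to the current one, or, if the current state is a mirror, it is the other mirror or the crossing.

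The step I expect to be hardest is the case $1-p-r=0$, where only the two mirrors are available. Here I plan a local planar argument. Traversing $C_\ee$ through $v$ twice, the two visits of a trail through a 4-regular vertex either cross or touch. When the current state is a mirror, the two strands touch. In that case the other mirror state reconnects $\alpha$ with $\beta$, because it pairs across the other pair of corners. That gives the finite loop, so $v\in\Pi_\ee$.

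\emph{Second assertion.} If $C_\ee=C_\ff$, the halves with respect to $\ee$ and with respect to $\ff$ differ only along the finite segment $S$ of $C_\ee$ between $\ee$ and $\ff$. A vertex lies in exactly one of $\Pi_\ee,\Pi_\ff$ only if its status of being visited by both halves changes. This can happen only if one of its visits lies on $S$. Since $S$ is finite, only finitely many vertices are affected, so $|\Pi_\ee\triangle\Pi_\ff|<\infty$.
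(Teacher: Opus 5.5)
The overall structure of your proof matches the paper's: a change at a vertex visited by at most one half leaves the other half intact, and at a vertex visited by both halves the relevant new state pairs $\alpha$ with $\beta$, closing the segment $L\ni\ee$. The gap is in the case $p+r=1$, where you must show that this pairing is a mirror.

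\textbf{The local argument cannot work.} Label the half-edges at $v$ cyclically $h_1,h_2,h_3,h_4$. Suppose the current state is the mirror pairing $\{h_1,h_2\},\{h_3,h_4\}$, with $R^-=h_1$ and $\alpha=h_2$. The strands touch at $v$, as you say. But nothing at $v$ decides where $L$ returns:
\begin{itemize}
\item if $\beta=h_3$, the closing pairing is $\{h_2,h_3\},\{h_1,h_4\}$, which is the other mirror;
\item if $\beta=h_4$, the closing pairing is $\{h_2,h_4\},\{h_1,h_3\}$, which is the crossing and has probability $0$.
\end{itemize}
Your phrase ``it pairs across the other pair of corners'' only says that the other mirror joins $\alpha$ to $h_3$. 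It does not say that $h_3$ is $\beta$ rather than $R^+$. Likewise, your ``key observation'' that the closing pairing is ``the other mirror or the crossing'' just lists both alternatives, so it verifies nothing.

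\textbf{What excludes $\beta=h_4$ is global, and this is the content of the paper's proof.} Suppose $\alpha$ and $\beta$ were opposite.
\begin{itemize}
\item $L$ closed up straight through $v$ is a closed curve visiting $v$ once.
\item Since $p+r=1$ there are no crossings at any vertex, so the curve has no transversal self-intersections; after pulling apart touching points it is a Jordan curve.
\item At $v$ it separates $h_1$ from $h_3$, so one of the tails $R^\pm$ starts in the bounded complementary region.
\item That tail cannot cross $L$, since every common vertex is a mirror. It cannot return to $v$, since all four half-edges there are used. So it stays in a compact set, which meets only finitely many edges. This contradicts the tail being infinite.
\end{itemize}
Your sketch uses none of the three ingredients this needs: the Jordan curve argument, the absence of crossings at all vertices rather than only at $v$, and the local finiteness of the embedding.

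\textbf{A minor omission.} The second assertion is not restricted to $I_\ee$. You should also note that if $C_\ee=C_\ff$ is finite, then $\Pi_\ee$ and $\Pi_\ff$ lie in the finite vertex set of $C_\ee$.
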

\begin{proof}
If at most one half visits $x$, a change at $x$ does not affect the other half. If both halves visit $x$, consider the state at $x$ that pairs the two half-edges through which they first arrive at $x$. It turns the segment of $C_\ee$ between these arrivals, which contains $\ee$, into a finite trajectory. If $p+r<1$, this state has positive probability. If $p+r=1$, there are no crossings, and this state is a mirror: otherwise the two arrivals are opposite at $x$, and the closed segment is a curve without transversal self-intersections that passes straight through $x$ and separates the other two half-edges at $x$. One of the two infinite tails of $C_\ee$ beyond $x$ would then start inside this curve, could neither cross it nor return to $x$, and would therefore be finite by local finiteness.

If $C_\ee=C_\ff$ is infinite, the halves for $\ee$ and those for $\ff$ differ only on the finite segment between $\ee$ and $\ff$. If $C_\ee$ is finite, then $\Pi_\ee$ and $\Pi_\ff$ consist of vertices of $C_\ee$.
\end{proof}

\begin{proposition}\label{prop:mirrors}
Suppose $P_{p,r}[I_\ee]>0$ for some edge $\ee$ of $M$. Then this holds for every edge, almost surely there is an infinite trajectory, and the following are equivalent:
\begin{align*}
\text{(A)}\quad&\forall\ee:\quad P_{p,r}[|\Pi_\ee|=\infty\mid I_\ee]=1;\\
\text{(E)}\quad&\exists\ee:\quad P_{p,r}[|\Pi_\ee|=\infty\mid I_\ee]=1;\\
\text{(A$'$)}\quad&\forall\ee:\quad P_{p,r}[|\Pi_\ee|\ge1\mid I_\ee]=1.
\end{align*}
\end{proposition}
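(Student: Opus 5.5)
The plan is to imitate the proof of Proposition~\ref{prop:perc}, with Lemma~\ref{lem:meeting} playing the role of the structural description of pivots. Write $P=P_{p,r}$.

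\emph{Transfer of $P[I_\ee]>0$.} Given edges $\ee,\ff$, choose a finite path in $M$ from $\ee$ to $\ff$. On $I_\ee$, I would change the states at finitely many vertices so that the infinite trajectory $C_\ee$ is rerouted through $\ff$ while staying infinite. Concretely, take a vertex $v$ of $C_\ee$ and follow a path of edges of $M$ from $v$ to $\ff$; choosing mirror or crossing states along this path splices an excursion to $\ff$ into one half of $C_\ee$. The catch is that the needed states must have positive probability. If $p+r<1$, all three states are available. If $r=1-p$, only $\bl$ and $\wh$ are available, and I would rely on the fact that at a vertex a mirror of the appropriate colour joins any two adjacent half-edges. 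So a trajectory can always be made to turn into a chosen neighbouring edge by fixing the state there. The other half, together with the part of the tail that was disturbed, must stay infinite. Here I would use Lemma~\ref{lem:energy} with a countable union over the choice of the finite modification, so that a single fixed finite modification has positive probability. Once $P[I_\ff]>0$ for every $\ff$, the event that some infinite trajectory exists is invariant under finite changes, and Kolmogorov's zero-one law gives that it has probability $1$.

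\emph{The equivalences.} (A)$\Rightarrow$(E) and (A)$\Rightarrow$(A$'$) are trivial.

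For $\neg$(A)$\Rightarrow\neg$(E), suppose $P[I_\ee,|\Pi_\ee|<\infty]>0$ and fix $\ff$. If $C_\ff=C_\ee$, the second part of Lemma~\ref{lem:meeting} gives $|\Pi_\ff|<\infty$ immediately. So the task is to force $\ff\in C_\ee$ by a finite modification that keeps $|\Pi|$ finite. The modification touches only finitely many vertices. By Lemma~\ref{lem:meeting}, pivots are exactly the meeting points of the two halves, so new meeting points can appear only on the finite rerouted piece. The halves far away are unchanged and still meet only finitely often. Applying Lemma~\ref{lem:energy} then gives $P[I_\ff,|\Pi_\ff|<\infty]>0$.

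For $\neg$(A)$\Rightarrow\neg$(A$'$), on $\{I_\ee,|\Pi_\ee|<\infty\}$ pick an edge $\ff$ on one half of $C_\ee$ beyond every vertex where the halves meet. Cut at $\ff$, one half of $C_\ff$ is the tail beyond $\ff$. The other half runs back through $\ee$ and then out along the other half of $C_\ee$. A common vertex of these two halves would be a common vertex of the two halves of $C_\ee$ lying beyond $\ff$, and by the choice of $\ff$ there is none. Hence $\Pi_\ff=\varnothing$ by Lemma~\ref{lem:meeting}. Countability over $\ff$ then yields a fixed $\ff$ with $P[I_\ff,\Pi_\ff=\varnothing]>0$.

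\emph{Main obstacle.} I expect the hardest step to be the splicing argument, both in the transfer of $P[I_\ee]>0$ and in $\neg$(A)$\Rightarrow\neg$(E), in the crossing-free case $r=1-p$. Changing a state at a vertex of $C_\ee$ can close off a finite loop, as in the proof of Lemma~\ref{lem:meeting}. I would handle this inductively along a path from $C_\ee$ to $\ff$, one vertex at a time. At each step, of the two mirror states at the next vertex, one keeps the modified trajectory infinite and absorbs the new edge. To see this, note that the two pairings differ by reconnecting four strand ends. An infinite trajectory through the vertex stays infinite in at least one of the two reconnections, since the four ends lie on at most two trajectories, with the two infinite tails among them. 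Exactly which reconnection works is the delicate planar point, and I would settle it with the Jordan-curve argument already used in Lemma~\ref{lem:meeting}.
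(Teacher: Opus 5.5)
Two of your three implications have genuine gaps. The step you flag as the main obstacle is in fact the easy one.

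In $\neg$(A)$\Rightarrow\neg$(A$'$), the claim $\Pi_\ff=\varnothing$ does not follow from your argument. The half of $C_\ff$ that runs back through $\ee$ contains the other half of $C_\ee$, but also the segment of the first half $H_1$ between $\ee$ and $\ff$. A trail in a 4-regular graph may visit a vertex twice, for example straight through a crossing, or on both sides of a mirror. So $H_1$ can visit some $z$ once before and once after $\ff$. Such a $z$ is not a meeting vertex of $C_\ee$, yet it lies on both halves of $C_\ff$, so $z\in\Pi_\ff$ by Lemma~\ref{lem:meeting}. Your choice of $\ff$ does not exclude this, since such straddling self-visits can occur beyond every edge of $H_1$.

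The paper avoids this by modifying the configuration. Let $x$ be the last vertex of $\Pi_\ee$ on a fixed half, and switch $x$ to the state that closes the segment through $\ee$; this state has positive probability by the proof of Lemma~\ref{lem:meeting}. The switch cuts off the whole finite middle part and joins the two tails beyond $x$ into one infinite trajectory. For an edge $\ff$ at $x$, the halves of $C_\ff$ are just these two tails. A common vertex of them would lie in $\Pi_\ee$ beyond $x$, so $\Pi_\ff=\varnothing$, and Lemma~\ref{lem:energy} with countability finishes.

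In $\neg$(A)$\Rightarrow\neg$(E), the claim that ``the halves far away are unchanged'' holds only if everything you splice into $C_\ee$ is finite. Suppose $\ff$, or an edge on your path to it, lies on another infinite trajectory $C'$. Then re-pairing at a common vertex joins tails of $C_\ee$ to tails of $C'$, and the new halves may meet infinitely often. Moreover, $\ff$ cannot become part of a finite excursion of $C_\ee$ until $C'$ has been cut. What is missing is the case analysis for an edge $\ff$ sharing an endpoint $x$ with $\ee$, followed by induction along paths:
\begin{enumerate}
\item if $C_\ff=C_\ee$, use Lemma~\ref{lem:meeting};
\item if $C_\ff$ is finite, any other state at $x$ inserts it into $C_\ee$ as a finite excursion;
\item if $C_\ff\ne C_\ee$ is infinite with $|\Pi_\ff|<\infty$, nothing needs to be done;
\item if $|\Pi_\ff|=\infty$, change the state at some $y\in\Pi_\ff$ to make $C_\ff$ finite. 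This does not touch $C_\ee$, because $C_\ff$ uses all four half-edges at $y$, and you are back in the finite case.
\end{enumerate}

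The same observation removes your ``delicate planar point'' in the transfer of $P[I_\ee]>0$. You only need to act on $I_\ee\setminus I_\ff$. There $C_\ee$ and $C_\ff$ use disjoint pairs of half-edges at $x$, so every other pairing at $x$ concatenates them into one infinite trajectory; in particular this holds for the other mirror when $r=1-p$. No Jordan-curve argument is needed.

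You also still owe the short argument that the existence of an infinite trajectory is invariant under finite changes. A change at $x$ leaves an infinite one-sided piece of every infinite trajectory through $x$; apply this also to the reverse change.
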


\begin{definition}\label{def:mirrors}
If the hypothesis and the equivalent conditions of Proposition~\ref{prop:mirrors} hold, then $M$ is called \emph{$(p,r)$-fragile for mirrors}, and \emph{$p$-fragile for mirrors} if $r=p$.
\end{definition}

\begin{proof}[Proof of Proposition~\ref{prop:mirrors}]
Write $P=P_{p,r}$. Let $\ee$ and $\ff$ be edges with a common endpoint $x$. On $I_\ee\setminus I_\ff$, the trajectories $C_\ee$ and $C_\ff$ use disjoint pairs of half-edges at $x$, and changing the state at $x$ to another state of positive probability joins them into one infinite trajectory. Lemma~\ref{lem:energy} and induction along paths show $P[I_\ff]>0$ for all $\ff$. A change at a vertex $x$ does not affect the infinite trajectories avoiding $x$, and cuts an infinite trajectory through $x$ into pieces, at least one of which is an infinite one-sided trail; this trail lies on an infinite trajectory after the change. Applied also to the reverse change, this shows that the existence of an infinite trajectory is invariant under changes at finitely many vertices. Hence it has probability $1$ by Kolmogorov's zero-one law. Clearly (A) implies (E) and (A$'$).

$\neg$(A)$\Rightarrow\neg$(E): Let $P[A_\ee]>0$, where $A_\ee:=I_\ee\cap\{|\Pi_\ee|<\infty\}$, and let $\ff$ have a common endpoint $x$ with $\ee$. On $A_\ee$, we make $A_\ff$ occur by changing finitely many states to states of positive probability. If $C_\ff$ is finite, we change the state at $x$ as above. This inserts $C_\ff$ into $C_\ee$ as a finite excursion, which changes the set of vertices visited by both halves only at finitely many vertices, so that $A_\ff$ occurs by Lemma~\ref{lem:meeting}. If $I_\ff$ occurs and $|\Pi_\ff|=\infty$, then $C_\ff\ne C_\ee$ by Lemma~\ref{lem:meeting}. Changing the state at some $y\in\Pi_\ff$ makes $C_\ff$ finite. As $C_\ff$ uses all four half-edges at $y$, this does not affect $C_\ee$, and we are back in the previous case. By Lemma~\ref{lem:energy} and induction, $P[A_\ff]>0$ for all $\ff$.

$\neg$(A)$\Rightarrow\neg$(A$'$): On $A_\ee$, if $\Pi_\ee\ne\varnothing$, let $x$ be the last vertex of $\Pi_\ee$ on one fixed half of $C_\ee$, and change the state at $x$ so as to close the segment through $\ee$; this joins the two remaining infinite tails at $x$. For an edge $\ff$ at $x$ on the resulting infinite trajectory, $\Pi_\ff=\varnothing$: by Lemma~\ref{lem:meeting}, every $y\in\Pi_\ff$ would lie on both tails, hence belong to the original $\Pi_\ee$ beyond $x$. If $\Pi_\ee=\varnothing$, let $\ff:=\ee$. Lemma~\ref{lem:energy} and countability give $P[I_\ff,\Pi_\ff=\varnothing]>0$ for some fixed $\ff$.
\end{proof}

\begin{figure}[tb]
\centering
\begin{tikzpicture}[x=.8cm,y=.8cm]
\foreach \x in {0,4.1,8.2}{
 \fill[gray!28] (\x,0)--(\x-.65,.55)--(\x-.65,-.55)--cycle;
 \fill[gray!28] (\x,0)--(\x+.65,.55)--(\x+.65,-.55)--cycle;
 \draw[gray!65] (\x-.65,.55)--(\x+.65,-.55);
 \draw[gray!65] (\x-.65,-.55)--(\x+.65,.55);
}
\draw[very thick] (-.3,0)--(.3,0);
\draw[very thick] (4.1,-.3)--(4.1,.3);
\node[below] at (0,-.6){$\bl:\ p$};
\node[below] at (4.1,-.6){$\wh:\ r$};
\node[below] at (8.2,-.6){$\crossstate:\ 1-p-r$};
\end{tikzpicture}
\par\medskip
\begin{tikzpicture}[xscale=1.0,yscale=0.85]
 \cbraid{8.1}{0}{1}{orange!85!black}\ctop{8.1}{orange!85!black}
\cbraid{8.1}{1}{3}{blue!90!black}
 \cbraid{8.1}{3}{4}{orange!85!black,dashed}\cbot{8.1}{orange!85!black,dashed}
 \hmir{8.1}{1}\vmir{8.1}{2}\hmir{8.1}{3}\vmir{8.1}{4}
 \bdots{8.1}
 \node[font=\footnotesize,align=center] at (8.1,-2.85) {(d)};
 \cbraid{5.4}{0}{2}{orange!85!black}\ctop{5.4}{orange!85!black}
 \cbraid{5.4}{2}{4}{orange!85!black,dashed}\cbot{5.4}{orange!85!black,dashed}
 \vmir{5.4}{1}\hmir{5.4}{2}\vmir{5.4}{4}
 \bdots{5.4}
 \node[font=\footnotesize,align=center] at (5.4,-2.85) {(c)};
 \sseg{0}{0}{1}{orange!85!black}\sseg{0}{1}{1}{orange!85!black}
 \sseg{0}{2}{-1}{orange!85!black}\sseg{0}{3}{-1}{orange!85!black}
 \ctopL{0}{orange!85!black}\cbotR{0}{blue!90!black}
 \sseg{0}{3}{1}{blue!90!black}\sseg{0}{2}{1}{blue!90!black}
 \sseg{0}{1}{-1}{blue!90!black}\sseg{0}{0}{-1}{blue!90!black}
 \cbotL{0}{orange!85!black,dashed}\ctopR{0}{blue!90!black}
 \vmir{0}{1}\vmir{0}{3}\vmir{0}{4}
 \bdots{0}
 \node[font=\footnotesize,align=center] at (0,-2.85) {(a)};
 \sseg{2.7}{0}{1}{orange!85!black}\sseg{2.7}{1}{1}{orange!85!black}
 \sseg{2.7}{2}{-1}{orange!85!black}\sseg{2.7}{3}{-1}{orange!85!black}
 \ctopL{2.7}{orange!85!black}\cbotR{2.7}{orange!85!black}
 \sseg{2.7}{3}{1}{orange!85!black,dashed}\sseg{2.7}{2}{1}{orange!85!black,dashed}
 \sseg{2.7}{1}{-1}{orange!85!black,dashed}\sseg{2.7}{0}{-1}{orange!85!black,dashed}
 \cbotL{2.7}{orange!85!black,dashed}\ctopR{2.7}{orange!85!black,dashed}
 \vmir{2.7}{1}\vmir{2.7}{3}
 \bdots{2.7}
 \node[font=\footnotesize,align=center] at (2.7,-2.85) {(b)};
\end{tikzpicture}
\caption{Top: the states $\bl$, $\wh$ and $\crossstate$ at a vertex of a braid, with the black corners shaded. Bottom: the connections from the two left exits of a braid (solid and dashed orange) and other trajectories (blue). (a), (b): no black mirror, with an even, respectively odd, number of crossings. (c): exactly one black mirror. (d): two black mirrors.}
\label{fig:braids}
\end{figure}

The medial graph $M((\N_0)_\bk)$ is a chain of braids, see Figure~\ref{fig:chain}. The braid of the edges between $n$ and $n+1$ has $k_n$ vertices and four exits, two toward $n$ on the left and two toward $n+1$ on the right. At its vertices, the black corners face $n$ and $n+1$, so that black mirrors are horizontal and white mirrors vertical, see Figures~\ref{fig:chain} and~\ref{fig:braids}.

\begin{proposition}\label{prop:braids}
Let $\bk\in\N^{\N_0}$ and $r\in(0,1-p]$. Then $M((\N_0)_\bk)$ is $(p,r)$-fragile for mirrors if and only if $(\N_0)_\bk$ is $p$-fragile for bonds.
\end{proposition}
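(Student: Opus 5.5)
The plan is to reduce both sides of the equivalence to the same pair of series conditions \eqref{eq:series}, by identifying, for one well-chosen edge, the event $I_\ee$ and the set $\Pi_\ee$ in terms of the numbers $Y_n$ of black mirrors in the braids. Because of Proposition~\ref{prop:mirrors}, it suffices to find one edge $\ee$ with $P_{p,r}[I_\ee]>0$ and check condition (E) for it. For the same reason, in the converse direction it suffices to show $P_{p,r}[I_\ee]=0$ or $P_{p,r}[|\Pi_\ee|<\infty\mid I_\ee]>0$ for this one $\ee$. I take $\ee$ to be the loop in the outer corner at the vertex $0$ of $(\N_0)_\bk$, as in Figure~\ref{fig:chain}. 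Declaring $e$ open iff $v_e$ is in state $\bl$ couples the model with bond percolation. Then $Y_n$, the number of black mirrors in the $n$-th braid, is the number of open edges between $n$ and $n+1$, and the $Y_n$ are independent $\Bin(k_n,p)$, as in the proof of Proposition~\ref{prop:bundle}.

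The first step, and the one I expect to be the main obstacle, is a complete description of how a braid connects its four exits, uniformly in the white mirrors and crossings, so that it works for every $r\in(0,1-p]$, including $r=1-p$ with no crossings. This is the content of Figure~\ref{fig:braids}. I would prove it by induction along the $k_n$ vertices of the braid, reading off the pairing at each vertex. The claim has three cases.
\begin{itemize}
\item If $Y_n=0$ (cases (a), (b)), the strands from the two left exits are connected to the two right exits, possibly swapped, and never return to the left. Equivalently, the black face containing $n$ is separated from the one containing $n+1$.
\item If $Y_n=1$ (case (c)), both left exits are routed through the unique black-mirror vertex, so a trajectory entering from the left on both sides visits that vertex twice.
\item If $Y_n\ge2$ (case (d)), the strand from the upper left exit and the strand from the lower left exit pass through the first and the last black mirror respectively. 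In between only finite trajectories live, and the two strands have no common vertex in the braid.
\end{itemize}
White mirrors and crossings between consecutive black mirrors only permute the strands locally. The point to check carefully is that they never let a strand from one side meet a strand from the other; I would verify this using the black/white corner structure, namely that black mirrors are horizontal and white mirrors vertical.

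With this description the trajectory $C_\ee$ is analysed braid by braid. Its two halves start above and below the vertex $0$ and enter braid $0$ from its two left exits.
\begin{itemize}
\item If $Y_0=0$, the halves are joined inside braid $0$ and $C_\ee$ closes up.
\item If $Y_0\ge1$, the halves exit to the right, one above and one below, and enter braid $1$ in the same position.
\end{itemize}
Inductively, $C_\ee$ is infinite iff $Y_n\ge1$ for all $n$, i.e.\ $I_\ee=I_0$. In particular $P_{p,r}[I_\ee]=\prod_n(1-q^{k_n})$, which is \eqref{eq:product}. On this event both halves visit a vertex iff it is the unique black mirror of a braid with $Y_n=1$. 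By Lemma~\ref{lem:meeting}, $\Pi_\ee$ is exactly this set of vertices, so $|\Pi_\ee|=\sum_n\mathbf 1_{\{Y_n=1\}}=|\Pi_0|$ on $I_0$.

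Finally, I compare with percolation. Since $|\Pi_\ee|=|\Pi_0|$ on $I_\ee=I_0$, the condition $P_{p,r}[I_\ee]>0$ with (E) for $\ee$ holds iff $P_p[I_0]>0$ with (E) for $x=0$. The Borel--Cantelli argument of Proposition~\ref{prop:bundle} shows that this holds iff \eqref{eq:series} holds. Combining Proposition~\ref{prop:mirrors} on the mirror side with Propositions~\ref{prop:perc} and~\ref{prop:bundle} on the bond side gives the equivalence.
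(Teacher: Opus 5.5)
\textbf{Gap: the case $Y_n=0$ is wrong, and with it the identity $I_\ee=I_0$ on which the rest of your argument depends.}

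Follow the strand entering a braid at its upper left exit. As long as it meets no black mirror, a white mirror keeps it on its side of the braid and a crossing moves it to the other side. So in a braid without black mirrors it leaves through the lower left exit if the number of crossings is even, and through the lower right exit if it is odd.

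Your first bullet (the left exits always lead to the right exits) and your later claim that $Y_0=0$ closes $C_\ee$ each hold for only one of these parities, and they contradict each other. For $r<1-p$, crossings have positive probability. A braid with $Y_n=0$ and an odd number of crossings then passes both halves of $C_\ee$, interchanged, on to the next braid, and each half visits all $k_n$ vertices. This has three consequences:
\begin{enumerate}
\item $I_\ee\ne I_0$.
\item $P_{p,r}[I_\ee]=\prod_n(1-b_{k_n})$ rather than $\prod_n(1-q^{k_n})$. Here $b_k=\frac12\bigl(q^k+(r-s)^k\bigr)$, with $s=1-p-r$, is the probability of no black mirror and an even number of crossings.
\item On $I_\ee\setminus I_0$, all vertices of such braids belong to $\Pi_\ee$ by Lemma~\ref{lem:meeting}. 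So $\Pi_\ee$ is not the set of unique black mirrors, and $|\Pi_\ee|=|\Pi_0|$ fails.
\end{enumerate}
Your proof is valid only for $r=1-p$, where $s=0$ and $b_k=q^k$. A telltale sign is that you never use $r>0$. For $r=0$ one would get $b_k=0$ for odd $k$, so $C_\ee$ would be infinite almost surely whenever all $k_n$ are odd, whatever \eqref{eq:series} says.

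\textbf{The missing step is a comparison in place of the identification.} Since $0<r\le q$, you have $-1<(r-s)/q=2r/q-1\le1$. Hence
\[
\min\{r/q,\tfrac12\}\,q^k\le b_k\le q^k ,
\]
and by the independence of the braids, $P_{p,r}[I_\ee]>0$ if and only if $\sum_nq^{k_n}<\infty$.

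Under this condition, the Borel--Cantelli lemma shows that almost surely only finitely many braids have no black mirror. So on $I_\ee$ the set $\Pi_\ee$ differs by a finite set from the set of unique black mirrors of braids with $Y_n=1$. These occur independently with probabilities $k_npq^{k_n-1}$. The Borel--Cantelli lemmas together with Propositions~\ref{prop:bundle} and~\ref{prop:mirrors} then finish the proof. With this correction your outline becomes the paper's proof.
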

\begin{proof}
Let $\ee$ be the edge in the outer corner at $0$; it joins the two left exits of the first braid. If a braid has a black mirror, each of its left exits is connected to a right exit. These two connections meet only at the black mirror if there is exactly one, and are disjoint if there are more. If the braid has no black mirror, the two left exits are connected either to each other or to the opposite right exits, and in the latter case both connections visit all vertices of the braid. See Figure~\ref{fig:braids}. Without black mirrors, a white mirror keeps the side of the braid and a crossing changes it. Hence the left exits are connected to each other if and only if the number of crossings is even. With $s:=1-p-r=q-r$, the probabilities of these two cases for a braid with $k$ vertices are
\begin{equation}\label{eq:braid-exact}
b_k=\frac{q^k+(r-s)^k}{2}\qquad\text{and}\qquad d_k=\frac{q^k-(r-s)^k}{2}.
\end{equation}
Since $b_k/q^k=(1+(2r/q-1)^k)/2$ and $-1<2r/q-1\le1$,
\begin{equation}\label{eq:braid-bounds}
\min\{r/q,\tfrac12\}\,q^k\le b_k\le q^k\qquad\text{and}\qquad 0\le d_k\le q^k.
\end{equation}
By the independence of the braids,
\begin{equation}\label{eq:mirror-product}
P_{p,r}[I_\ee]=\prod_{n\ge0}(1-b_{k_n})>0\quad\Longleftrightarrow\quad\sum_{n\ge0}q^{k_n}<\infty.
\end{equation}
Let $\sum_nq^{k_n}<\infty$. By the Borel--Cantelli lemma, almost surely only finitely many braids have no black mirror. On $I_\ee$, all other meetings of the two halves are the black mirrors in the braids with exactly one black mirror, which occur independently with probabilities $k_npq^{k_n-1}$. The Borel--Cantelli lemmas, Lemma~\ref{lem:meeting} and Propositions~\ref{prop:bundle} and~\ref{prop:mirrors} complete the proof.
\end{proof}

\begin{figure}[tb]
\centering
\begin{tikzpicture}[scale=1.48,
  bfnt/.style={blue!50,line width=.45pt},
  med/.style={red!75!black,line width=.5pt},
  medray/.style={orange!85!black,line width=.8pt},
  mv/.style={circle,fill=red!75!black,inner sep=0pt,minimum size=2.4pt}]
\begin{scope}
\clip (-.6,-.6) rectangle (6.6,3.95);
\let\E\Abfnt \let\Ey\Abfnt \ustedges
\ustcorners
\let\E\Abraid \let\Ey\Abraidray \ustedges
\end{scope}
\draw[med] (5.7525,3.5) to[out=120,in=-60] (5.62,3.78);
\draw[med] (6.2475,3.5) to[out=60,in=240] (6.38,3.78);
\foreach \t in {1,2,3}{
 \fill[red!75!black] ($(5.62,3.78)+(120:.09*\t)$) circle (.65pt);
 \fill[red!75!black] ($(6.38,3.78)+(60:.09*\t)$) circle (.65pt);}
\node[font=\small,orange!85!black,inner sep=0pt] at (3.4,1) {$x$};
\end{tikzpicture}
\caption{A bundled one-ended tree (blue) and its medial graph (red and orange). The orange braids belong to the ray from the leaf $x$. Every finite branch off this ray gives a piece of the medial graph with two exits. Replacing these pieces by single edges leaves a chain of braids as in Figure~\ref{fig:chain}.}
\label{fig:tree-medial}
\end{figure}

\begin{proposition}\label{prop:tree-mirrors}
Let $T$ be a one-ended locally finite tree embedded in $\R^2$ such that every compact set meets only finitely many edges, let $\bk\in\N^{V(T)}$, and draw the $k_x$ copies of each edge $\{x,a(x)\}$ close to it. Then, for every $r\in(0,1-p]$, $M(T_\bk)$ is $(p,r)$-fragile for mirrors if and only if $T_\bk$ is $p$-fragile for bonds.
\end{proposition}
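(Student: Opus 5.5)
The plan is to reduce the statement to Proposition~\ref{prop:braids} by the same pruning idea as in Proposition~\ref{prop:det-tree}, now carried out for trajectories, as suggested by Figure~\ref{fig:tree-medial}. Fix a vertex $x$ of $T$, say a leaf if one exists, and consider the ray $(a^n(x))_{n\ge0}$. The braids of $M(T_\bk)$ on this ray, those of the bundles $\{a^n(x),a^{n+1}(x)\}$, form a chain. Everything else of $M(T_\bk)$ splits into pieces coming from the finite branches off the ray. The branches hanging at $a^n(x)$ sit in corners between consecutive ray bundles or in the outer corner at $x$. Each maximal group of consecutive branches in one corner gives a finite piece of $M(T_\bk)$, with two exits attached to the adjacent braids of the chain.

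\textbf{Step 1: the pieces.} Each such piece has finitely many vertices and exactly two exits. Since every trajectory is a closed trail or bi-infinite, a trajectory entering a piece through one exit must leave through the other, after finitely many steps. Replacing each piece by a single edge therefore turns $M(T_\bk)$ into $M((\N_0)_{\bk^x})$. This replacement maps trajectories through the ray braids bijectively to the trajectories of the chain. It preserves finiteness, and it preserves which ray-braid vertices are visited by both halves. Choose $\ee$ to be the edge at the outer corner of $x$, or one of the two exit edges of the piece there, which amounts to the same by Lemma~\ref{lem:meeting}.

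\textbf{Step 2: pivots inside pieces.} Vertices inside the pieces can also be visited by both halves of $C_\ee$, so I must check that they contribute finitely many pivots. This is the main obstacle. A piece is visited by $C_\ee$ at most once as a passage, possibly with internal returns. Between them, the pieces visited contain only finitely many such vertices in total? Not obviously, since infinitely many pieces are crossed. I would argue as follows. If both halves visit a vertex $y$ in a piece $Q$, then both halves pass through $Q$. The chain part of $C_\ee$ enters $Q$ once, through one exit, and leaves through the other. The passage through $Q$ is therefore a single segment of one half, unless $\ee$ lies before $Q$ on both halves, which is impossible. The only exception is the piece containing $\ee$. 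Hence only that finite piece can contain meeting vertices off the ray braids, and $\Pi_\ee$ differs from the set of meeting vertices in ray braids by a finite set.

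\textbf{Step 3: conclusion.} With Step~1, under $P_{p,r}$ the event $I_\ee$ and the set of meeting points in ray braids have the same law as for the chain $M((\N_0)_{\bk^x})$. The chain is obtained by conditioning on the independent states inside the pieces, which do not matter. So $P_{p,r}[I_\ee]>0$ and $P_{p,r}[|\Pi_\ee|=\infty\mid I_\ee]=1$ hold for $M(T_\bk)$ if and only if they hold for the chain. By condition (E) of Proposition~\ref{prop:mirrors} this gives: $M(T_\bk)$ is $(p,r)$-fragile for mirrors if and only if $M((\N_0)_{\bk^x})$ is. By Proposition~\ref{prop:braids} the latter holds if and only if $(\N_0)_{\bk^x}$ is $p$-fragile for bonds. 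By Proposition~\ref{prop:det-tree} that holds if and only if $T_\bk$ is. A small point to verify is that the black/white coloring of the chain agrees with that of $M(T_\bk)$ on the ray braids, so that $\bl$ is still a mirror along primal edges; this is immediate since black faces contain vertices of $T$.
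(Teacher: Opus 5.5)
Your proposal is correct and follows essentially the same route as the paper. Both replace each finite piece with two exits by a single edge to obtain the chain $M((\N_0)_{\bk^x})$, and both note that $C_\ee$ crosses each piece in one contiguous segment lying in a single half, so that by Lemma~\ref{lem:meeting} pivots can only occur in ray braids; both then conclude via Propositions~\ref{prop:mirrors}, \ref{prop:braids} and~\ref{prop:det-tree}. One simplification: a leaf always exists because descendant trees are finite, and if you take $x$ to be a leaf and $\ee$ the edge in its outer corner, the exceptional piece in your Step~2 disappears. This is exactly the paper's claim that no vertex inside a piece is visited by both halves.
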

\begin{proof}
Let $x$ be a leaf of $T$; it exists since the trees of descendants are finite. Each finite branch of $T_\bk$ off the ray $(a^n(x))_{n\ge0}$, together with the bundle joining it to the ray, gives a finite piece of $M(T_\bk)$ with two exits, see Figure~\ref{fig:tree-medial}. In every configuration, the pairings decompose such a piece into closed trails and one trail between its two exits. Replacing each piece by a single edge therefore preserves all trajectories outside the pieces, and yields $M((\N_0)_{\bk^x})$ with independent states with the same probabilities. No vertex inside a piece is visited by both halves of an infinite trajectory through an edge outside the piece. Hence, for the edge $\ee$ in the outer corner at $x$, the event $I_\ee$ occurs in $M(T_\bk)$ if and only if it occurs in $M((\N_0)_{\bk^x})$, and on $I_\ee$ the two sets $\Pi_\ee$ coincide by Lemma~\ref{lem:meeting}. By Proposition~\ref{prop:mirrors}, each of these two medial graphs is $(p,r)$-fragile if and only if $P_{p,r}[I_\ee]>0$ and $P_{p,r}[|\Pi_\ee|=\infty\mid I_\ee]=1$. Propositions~\ref{prop:braids} and~\ref{prop:det-tree} complete the proof.
\end{proof}

\begin{proof}[Proof of Theorem~\ref{th:main}]
Let $\rho$, $g$ and $\bK$ be as in Corollary~\ref{cor:stationary} with $d=2$. By Theorem~\ref{th:tree} and Proposition~\ref{prop:tree-mirrors}, almost surely $M(\TT_\bK)$ is $(p,r)$-fragile for mirrors for every $r\in(0,1-p]$. Proposition~\ref{prop:mirrors} and Lemma~\ref{lem:meeting} yield the assertions about the trajectories. The parallel copies and their medial vertices can be drawn in disjoint narrow neighborhoods of the lattice edges, and the medial edges in the corners in disjoint small neighborhoods of the lattice vertices, equivariantly under the symmetries of $\Z^2$. Since $\TT_\bK$ is a factor of $\TT$ by Corollary~\ref{cor:stationary}, so is $M(\TT_\bK)$. The bundle of $x$ carries $K_x$ vertices of $M(\TT_\bK)$. Hence, if $\TT$ is stationary, the vertex intensity of $M(\TT_\bK)$ is $\EE[K_0;\,0\in V(\TT)]$, which is finite by Corollary~\ref{cor:stationary} if $\EE[\log(H(0)+2);\,0\in V(\TT)]<\infty$.
\end{proof}

By Propositions~\ref{prop:perc} and~\ref{prop:tree-mirrors}, whether $M(T_\bk)$ is $(p,r)$-fragile for mirrors does not depend on $r$, and it requires $p=p_c(T_\bk)$. The proof of $p=p_c$ in Proposition~\ref{prop:perc} relies on the monotonicity of Bernoulli percolation in $p$, which has no counterpart for mirrors.

\begin{question}
Is there a connected 4-regular multigraph embedded in $\R^2$ such that every compact set meets only finitely many edges, which is $p$-fragile for mirrors for two different values of $p$?
\end{question}

\section*{Acknowledgment}
This paper is based on a talk given at the workshop \emph{Probability and Statistical Physics} on the occasion of Franz Merkl's 60th birthday, LMU Munich, October 9, 2026. The author thanks Elena Kosygina for helpful discussions. This preprint was written with substantial assistance from the AI systems ChatGPT (OpenAI) and Claude (Anthropic), which were used for drafting text, proofs and figures; the author is responsible for its content.

\end{document}